\newcolumntype{P}[1]{>{\centering\arraybackslash}p{#1}}
\theoremstyle{thmstyleone}%
\newtheorem{theorem}{Theorem}
\newtheorem{proposition}[theorem]{Proposition}%
\newtheorem{corollary}[theorem]{Corollary}
\theoremstyle{thmstyletwo}%
\newtheorem{remark}{Remark}%
\theoremstyle{thmstylethree}%
\newtheorem{definition}{Definition}%
\begin{document}

\title[Article Title]{A Note on the Hausdorff Distance between Norm Balls and their Linear Maps}

\author*[1]{\fnm{Shadi} \sur{Haddad}}\email{shhaddad@ucsc.edu}

\author[1]{\fnm{Abhishek} \sur{Halder}}\email{ahalder@ucsc.edu}

\equalcont{These authors contributed equally to this work.}

\affil*[1]{\orgdiv{Applied Mathematics}, \orgname{University of California at Santa Cruz}, \orgaddress{\street{1156 High Street}, \city{Santa Cruz}, \postcode{95064}, \state{California}, \country{USA}}}


\abstract{We consider the problem of computing the (two-sided) Hausdorff distance between the unit $\ell_{p_{1}}$ and $\ell_{p_{2}}$ norm balls in finite dimensional Euclidean space for $1 \leq p_1 < p_2 \leq \infty$, and derive a closed-form formula for the same. We also derive a closed-form formula for the Hausdorff distance between the $k_1$ and $k_2$ unit $D$-norm balls, which are certain polyhedral norm balls in $d$ dimensions for $1 \leq k_1 < k_2 \leq d$. When two different $\ell_p$ norm balls are transformed via a common linear map, we obtain several estimates for the Hausdorff distance between the resulting convex sets. These estimates upper bound the Hausdorff distance or its expectation, depending on whether the linear map is arbitrary or random. We then generalize the developments for the Hausdorff distance between two set-valued integrals obtained by applying a parametric family of linear maps to different $\ell_p$ unit norm balls, and then taking the Minkowski sums of the resulting sets in a limiting sense. To illustrate an application, we show that the problem of computing the Hausdorff distance between the reach sets of a linear dynamical system with different unit norm ball-valued input uncertainties, reduces to this set-valued integral setting.}

\keywords{Hausdorff distance, Convex geometry, Norm balls, Reach set.}

\maketitle

\section{Introduction}\label{sec:Intro}
Given compact $\mathcal{X},\mathcal{Y}\subset\mathbb{R}^{d}$, the two sided Hausdorff distance $\delta$ between them is a mapping $\delta:\mathcal{X}\times\mathcal{Y}\mapsto \mathbb{R}_{\geq 0}$ defined as
\begin{align}
\delta\left(\mathcal{X},\mathcal{Y}\right) := \max\bigg\{\underset{\bm{x}\in\mathcal{X}}{\sup}\:\underset{\bm{y}\in\mathcal{Y}}{\vphantom{\sup}\inf}\|\bm{x}-\bm{y}\|_{2},\:\underset{\bm{y}\in\mathcal{Y}}{\sup}\:\underset{\bm{x}\in\mathcal{X}}{\vphantom{\sup}\inf}\|\bm{x}-\bm{y}\|_{2}\bigg\},
\label{DefHausdorffDist}	
\end{align}
where $\|\cdot\|_2$ is the Euclidean norm with the associated scalar product $\langle\cdot,\cdot\rangle$. Denoting the unit 2 norm ball in $\mathbb{R}^{d}$ as $\mathbb{B}^{d}_2$, an equivalent definition of the Hausdorff distance is
\begin{align}
\delta\left(\mathcal{X},\mathcal{Y}\right) := \inf\{\lambda\geq 0 \mid \mathcal{X} \subset \mathcal{Y} \dotplus \lambda\mathbb{B}^{d}_2, \: \mathcal{Y} \subset \mathcal{X} \dotplus \lambda\mathbb{B}^{d}_2\},
\label{EquivDefHausdorffDist}    
\end{align}
where $\dotplus$ denotes the Minkowski sum. As is well-known \cite[p. 60-61]{schneider2014convex}, $\delta\geq 0$ is a metric. The distance was introduced by Hausdorff in 1914 \cite[p. 293ff]{hausdorff1914grundzuge}, and can be considered more generally on the set of nonempty closed and bounded subsets of a metric space $(\mathcal{M},{\rm{dist}})$ by replacing the Euclidean distance $\|\cdot\|_{2}$ in \eqref{DefHausdorffDist} with ${\rm{dist}}(\cdot,\cdot)$. The Hausdorff distance and the associated topology, have found widespread applications in mathematical economics \cite{hildenbrand2015core}, stochastic geometry \cite{stoyan2013stochastic}, set-valued analysis \cite{serra1998hausdorff}, image processing \cite{huttenlocher1993comparing} and pattern recognition \cite{jesorsky2001robust}. The distance $\delta$ has several useful properties with respect to set operations, see e.g., \cite[Lemma 2.2]{de1976differentiability}, \cite[Lemma A2]{serry2021overapproximating}.

The support function $h_{\mathcal{K}}(\cdot)$ of a compact convex set $\mathcal{K} \subset \mathbb{R}^{d}$, is given by
\begin{align}
h_{\mathcal{K}}(\bm{y}) := \underset{\bm{x}\in\mathcal{K}}{\sup}\:\{\langle\bm{y},\bm{x}\rangle \mid \bm{y}\in\mathbb{S}^{d-1}\},
\label{DefSptFn}	
\end{align}
where $\langle\cdot,\cdot\rangle$ denotes the standard Euclidean inner product, and $\mathbb{S}^{d-1}$ is the unit sphere in $\mathbb{R}^{d}$. The definition \eqref{DefSptFn} can be extended to any closed convex set $\mathcal{K}$ in the sense $h_{\mathcal{K}}=+\infty$ if and only if $\mathcal{K}$ is unbounded \cite[Prop. 2.1.3]{hiriart2013convex}. Geometrically, $h_{\mathcal{K}}(\bm{y})$ gives the signed distance of the supporting hyperplane of $\mathcal{K}$ with outer normal vector $\bm{y}$, measured from the origin. The support function $h_{\mathcal{K}}(\bm{y})$ uniquely determines the set $\mathcal{K}$. Since only the direction of the normal vector $\bm{y}$ matters, we restrict the domain of the support function on $\mathbb{S}^{d-1}$ instead of $\mathbb{R}^{d}$. Doing so, invites no loss generality because a support function $h_{\mathcal{K}}(\cdot)$ is always positive homogeneous of degree one (see e.g., \cite[p. 209]{hiriart2013convex}). Furthermore, $h_{\mathcal{K}}(\bm{y})$ is a convex function of $\bm{y}$. From \eqref{DefSptFn}, we note that for given $\bm{T}\in\mathbb{R}^{d^{\prime}\times d}$ and compact convex $\mathcal{K}\subset\mathbb{R}^{d}$, the support function of the compact convex set $\bm{T}\mathcal{K}\subset \mathbb{R}^{d^{\prime}}$ is
\begin{align}
h_{\mathcal{\bm{T}K}}(\bm{y}) = h_{\mathcal{K}}(\bm{T}^{\top}\bm{y}), \quad \bm{y}\in\mathbb{S}^{d^{\prime}-1}.
\label{SptFnLinearMap}    
\end{align}
For more details on the support function, we refer the readers to \cite[Ch. V]{hiriart2013convex}. 

The two-sided Hausdorff distance \eqref{DefHausdorffDist} between a pair of convex compact sets $\mathcal{K}_{1}$ and $\mathcal{K}_{2}$ in $\mathbb{R}^{d}$ can be expressed in terms of their respective support functions $h_{1}(\cdot), h_{2}(\cdot)$ as 
\begin{align}
\delta\left(\mathcal{K}_{1},\mathcal{K}_{2}\right) = \underset{\bm{y}\in \mathbb{S}^{d-1}}{\sup}\quad \big\vert h_{1}(\bm{y})- h_{2}(\bm{y})\big\vert,
\label{HausdorffSptFn} 
\end{align}
where the absolute value in the objective can be dispensed if one set is included in another\footnote{This is because $\mathcal{K}_{1}\subseteq\mathcal{K}_{2}$ if and only if $h_{1}(\bm{y}) \leq h_{2}(\bm{y})$ for all $\bm{y}\in\mathbb{S}^{d-1}$.}. Thus, computing $\delta$ leads to an optimization problem over all unit vectors $\bm{y}\in\mathbb{S}^{d-1}$.

The support function, by definition, is positive homogeneous of degree one. Therefore, the unit sphere constraint $\|\bm{y}\|_{2} = 1$ in \eqref{HausdorffSptFn} admits a lossless relaxation to the unit ball constraint $\|\bm{y}\|_{2} \leq 1$. Even so, problem \eqref{HausdorffSptFn} is nonconvex because its objective is nonconvex in general.

In this study, we consider computing \eqref{HausdorffSptFn} for the case when the sets $\mathcal{K}_1,\mathcal{K}_2$ are different unit norm balls and more generally, linear maps of such norm balls in an Euclidean space. This can be viewed as quantifying the conservatism in approximating a norm ball by another in terms of the Hausdorff distance. We show that computing the associated Hausdorff distances lead to optimizing the difference between norms over the unit sphere or ellipsoid. While bounds on the difference of norms over the unit cube have been studied  before \cite{shisha1967differences}, the optimization problems arising here seem new, and the techniques in \cite{shisha1967differences} do not apply in our setting. 

\textbf{Motivating application:} A practical motivation for our study comes from control theory and formal verification literature \cite{pecsvaradi1971reachable,witsenhausen1972remark,chutinan1999verification,kurzhanski1997ellipsoidal,varaiya2000reach,le2010reachability,althoff2021set,haddad2023curious,haddad2021anytime}. There, it is of interest to investigate how controlled dynamical systems evolve relative to each other subject to different set-valued input uncertainties. For example, if the controlled dynamical systems model vehicles driving on road, then one practical question is whether the set of states reachable by one vehicle at a specific time, can intersect the other set, possibly resulting in a collision. The different set-valued inputs in the vehicle context, represent respective actuation uncertainties. Then, a natural way to quantify safety or the lack of it, is by computing the distance between such sets in terms of the Hausdorff metric.

In such applications, for computational ease, one often assumes box-valued (i.e., $\ell_{\infty}$ norm ball) input uncertainty sets even though the true input uncertainty sets might be $\ell_{p}$ norm balls for $0<p<\infty$. Such computational approximation in the input uncertainty sets lead to over-approximation of the reach sets \cite[Sec. III]{haddad2023curious}. Then, quantifying the conservatism in over-approximation amounts to computing the Hausdorff distance between such reach sets. When the controlled dynamical systems are linear, it turns out that the corresponding Hausdorff distance \eqref{HausdorffSptFn} takes the form
$$\underset{\|\bm{y}\|_2=1}{\sup}\left(\displaystyle\int_{0}^{t}\|\bm{T}(\tau)\bm{y}\|_{q_2}-\|\bm{T}(\tau)\bm{y}\|_{q_1}\right)\:\differential\tau, \qquad 1\leq q_2 < q_1 \leq \infty,$$
which is what we investigate in Sec. \ref{sec:integral} in this paper. 

We also provide an application Example in Sec. \ref{sec:integral}, where the different reach sets result from the motion of a satellite subject to $\ell_2$ and $\ell_{\infty}$ norm ball-valued uncertain input sets. In this application, the input components denote the radial and tangential thrusts, and depending on the actuators installed (e.g., gas jets, reaction wheel), two different scenarios may arise: one where there are hard bounds on the magnitude of the thrust components (i.e., $\ell_{\infty}$ norm ball), and another in which there is bounded thrust magnitude (i.e., $\ell_2$ norm ball). So from an engineering perspective, it is natural to quantify the Hausdorff distance between the reach sets resulting from two different types of actuation uncertainties.  

\textbf{Related works:} There have been several works on designing approximation algorithms for computing the Hausdorff distance between convex polygons \cite{atallah1983linear}, curves \cite{belogay1997calculating}, images \cite{huttenlocher1993comparing}, meshes \cite{aspert2002mesh} or point cloud data \cite{taha2015efficient}; see also \cite{goffin1983relationship,alt1995approximate,alt2003computing,konig2014computational,jungeblut2021complexity}. There are relatively few \cite{marovsevic2018hausdorff} known exact formula for the Hausdorff distance between sets. To the best of the authors' knowledge, analysis of the Hausdorff distance between norm balls and their linear maps as pursued here, did not appear in prior literature.

\textbf{Contributions:} Our specific contributions are as follows.
\begin{list}{$\bullet$}{} 
    \item We deduce a closed-form formula for the Hausdorff distance between unit $\ell_{p_{1}}$ and $\ell_{p_{2}}$ norm balls in $\mathbb{R}^{d}$ for $1 \leq p_1 < p_2 \leq \infty$, i.e., a formula for $\delta\left(\mathbb{B}_{p_1}^d, \mathbb{B}_{p_2}^d\right)$. We provide details on the landscape of the corresponding nonconvex optimization objective. We also derive closed-form formula between the $k_1$ and $k_2$ unit $D$-norm balls in $\mathbb{R}^{d}$ for $1 \leq k_1 < k_2 \leq d$.
     
    \item We derive upper bound for Hausdorff distance between the common linear transforms of the $\ell_p$ and $\ell_q$ norm balls. This upper bound is a scaled $2\rightarrow q$ induced operator norm of the linear map, where $1\leq q \leq \infty$ and the scaling depends on both $p$ and $q$. We point out a class of linear maps for which the aforesaid closed-form formula for the Hausdorff distance is recovered, thereby broadening the applicability of the formula.
    
    \item Bringing together results from the random matrix theory literature, we provide upper bounds for the expected Hausdorff distance when the linear map is random with independent mean-zero entries for two cases: when the entries have magnitude less than unity, and when the entries are standard Gaussian.
    
    \item We provide certain generalization of the aforesaid formulation by considering the Hausdorff distance between two set-valued integrals. These integrals represent convex compact sets obtained by applying a parametric family of linear maps to the unit norm balls, and then taking the Minkowski sums of the resulting sets in a suitable limiting sense. We highlight an application for the same in computing the Hausdorff distance between the reach sets of a controlled linear dynamical system with unit norm ball-valued input uncertainties. 
\end{list}

The organization is as follows. In Sec. \ref{sec:NormBalls}, we consider the Hausdorff distance between unit norm balls for two cases: $\ell_p$ norm balls for different $p$, and $D$-norm balls parameterized by different parameter $k$. We discuss the landscape of the corresponding nonconvex optimization problem and derive closed-form formula for the Hausdorff distance. Sec. \ref{sec:ComposeWithLinear} considers the Hausdorff distance between the common linear transformation of different $\ell_p$ norm balls, and bounds the same when the linear map is either arbitrary or random. In Sec. \ref{sec:integral}, we consider an integral version of the problem considered in Sec. \ref{sec:ComposeWithLinear} and illustrate one application in controlled linear dynamical systems with set-valued input uncertainties where this structure appears. These results could be of independent interest.  

\textbf{Notations and preliminaries:} Most notations are introduced in situ. We use $\llbracket n\rrbracket:=\{1,2,\hdots,n\}$ to denote the set of natural numbers from $1$ to $n$. Boldfaced lowercase and boldfaced uppercase letters are used to denote the vectors and matrices, respectively. The symbol $\mathbb{E}$ denotes the mathematical expectation, ${\rm{card}}(\cdot)$ denotes the cardinality of a set, the superscript $^{\top}$ denotes matrix transpose, and the superscript $^{\dagger}$ denotes the appropriate pseudo-inverse. For a column vector $\bm{x}\in\mathbb{R}^{d}$ whose components are differentiable with respect to (w.r.t.) a scalar parameter $t$, the symbol $\dot{\bm{x}}$ denotes componentwise derivative of $\bm{x}$ w.r.t. $t$. The notation $\lfloor\cdot\rfloor$ stands for the floor function that returns the greatest integer less than or equal to its real argument. The function $\exp(\cdot)$ with matrix argument denotes the matrix exponential. The inequality $\succeq$ is to be understood in L\"{o}wner sense; e.g., saying $\bm{S}$ is a symmetric positive semidefinite matrix is equivalent to stating $\bm{S}\succeq \bm{0}$.

For any norm $\|\cdot\|$ in $\mathbb{R}^{d}$, its dual norm $\left(\|\cdot\|\right)^{*}$ is defined to be the support function of its unit norm ball, i.e.,
$$\left(\|\cdot\|\right)^{*}(\bm{y}) = \underset{\|\bm{x}\|\leq 1}{\sup}\langle \bm{y},\bm{x}\rangle.$$
The notation above emphasizes that the dual norm is a function of the vector $\bm{y}$. For $1\leq p \leq \infty$, it is well known that the dual of the $\ell_{p}$ norm is the $\ell_{q}$ norm where $q$ is the H\"{o}lder conjugate of $p$.

For $1\leq p, q \leq \infty$, matrix $\bm{M}\in\mathbb{R}^{m\times n}$ viewed as a linear map $\bm{M}:\ell_{p}\left(\mathbb{R}^{n}\right)\mapsto\ell_{q}\left(\mathbb{R}^{m}\right)$, has an associated induced operator norm
\begin{align}
\|\bm{M}\|_{p\rightarrow q} := \underset{\bm{x}\neq\bm{0}}{\sup}\dfrac{\|\bm{Mx}\|_{q}}{\|\bm{x}\|_{p}} = \underset{\|\bm{x}\|_{p}=1}{\sup}\|\bm{Mx}\|_{q},    
\label{defInducedOpNorm}    
\end{align}
where as usual $\|\bm{x}\|_{p} := \left(\sum_{i=1}^{n}\mid x_{i}\mid ^{p}\right)^{1/p}$, $\|\bm{Mx}\|_{q} := \left(\sum_{i=1}^{m}\mid (\bm{Mx})_{i}\mid ^{q}\right)^{1/q}$ for $p,q$ finite, $\|\cdot\|_{\infty}$ is the sup norm, and $(\bm{Mx})_{i}$ denotes the $i$th component of the vector $\bm{Mx}$. Several special cases of \eqref{defInducedOpNorm} are well known: the case $p=q$ is the standard matrix $p$ norm, the case $p=\infty, q=1$ is the Grothendieck problem \cite{grothendieck1956resume,alon2004approximating} that features prominently in combinatorial optimization, and its generalization $p\in(1,\infty), q=1$ is the $\ell_{p}$ Grothendieck problem \cite{kindler2010ugc}. In our development, the operator norm $\|\bm{M}\|_{2\rightarrow q}$ arises where $1 < q \leq \infty$. 

\section{Hausdorff Distance between Unit Norm Balls}\label{sec:NormBalls}
We consider the case when in \eqref{HausdorffSptFn}, the sets $\mathcal{K}_{1}\equiv \mathbb{B}^{d}_{p_{1}},\mathcal{K}_{2}\equiv \mathbb{B}^{d}_{p_{2}}$, the unit $\ell_{p_{1}}$ and $\ell_{p_{2}}$ norm balls in $\mathbb{R}^{d}$, $d\geq 2$, for $1 \leq p_1 < p_2 \leq \infty$. Clearly, the Hausdorff distance $\delta=0$ for $p_1=p_2$, and $\delta >0$ otherwise. Then the corresponding support functions $h_{1}(\cdot), h_{2}(\cdot)$ are the respective dual norms, i.e.,
$$h_{1}(\bm{y}) = \|\bm{y}\|_{q_{1}}, \quad h_{2}(\bm{y}) = \|\bm{y}\|_{q_{2}},\quad \frac{1}{p_1}+\frac{1}{q_{1}}=1, \quad \frac{1}{p_2}+\frac{1}{q_{2}}=1,$$
for  $1 \leq q_2 < q_1 \leq \infty$. By monotonicity of the norm function, we know that $\|\cdot\|_{q_{1}} \leq \|\cdot\|_{q_{2}}$. Therefore, the Hausdorff distance \eqref{HausdorffSptFn} in this case becomes
\begin{align}
\delta\left(\mathcal{K}_{1},\mathcal{K}_{2}\right) = \delta\left(\mathbb{B}_{p_1}^d, \mathbb{B}_{p_2}^d\right) = \underset{\|\bm{y}\|_{2} = 1}{\sup}\quad \left(\|\bm{y}\|_{q_{2}}- \|\bm{y}\|_{q_{1}}\right)\label{HausdorffbetnNormBallsFinal}    
\end{align}
which has a difference of convex (DC) objective. In fact, the objective is nonconvex (the difference of convex functions may or may not be convex in general) because it admits multiple global maximizers and minimizers.

The objective in \eqref{HausdorffbetnNormBallsFinal} is invariant under the plus-minus sign permutations among the components of the unit vector $\bm{y}$. There are $2^{d}$ such permutations feasible in $\mathbb{R}^{d}$ which implies that the landscape of the objective in \eqref{HausdorffbetnNormBallsFinal} has $2^{d}$ fold symmetry.  In other words, the feasible set is subdivided into $2^{d}$ sub-domains as per the sign permutations among the components of $\bm{y}$, and the``sub-landscapes" for these sub-domains are identical.

Since $\|\bm{y}\|_{q_1} \leq \|\bm{y}\|_{q_2}$ for $ 1 \leq q_2 < q_1 \leq \infty$, hence $0 \leq \delta$. The \emph{global minimum} value of the objective in \eqref{HausdorffbetnNormBallsFinal} is zero, which is achieved by any scaled basis vector, i.e., by $\bm{y}^{\rm{min}}:=\alpha\bm{e}_{k}\in\mathbb{R}^{d}$ for any $k\in\llbracket d\rrbracket$ and arbitrary $\alpha\in\mathbb{R}\setminus\{0\}$. These $\bm{y}^{\rm{min}}$ comprise uncountably many global minimizers for \eqref{HausdorffbetnNormBallsFinal}.

\begin{figure}%
    \centering
    \subfloat[Convex $\ell_p$ norm balls in $\mathbb{R}^{2}$.]{{\includegraphics[width=0.48\linewidth]{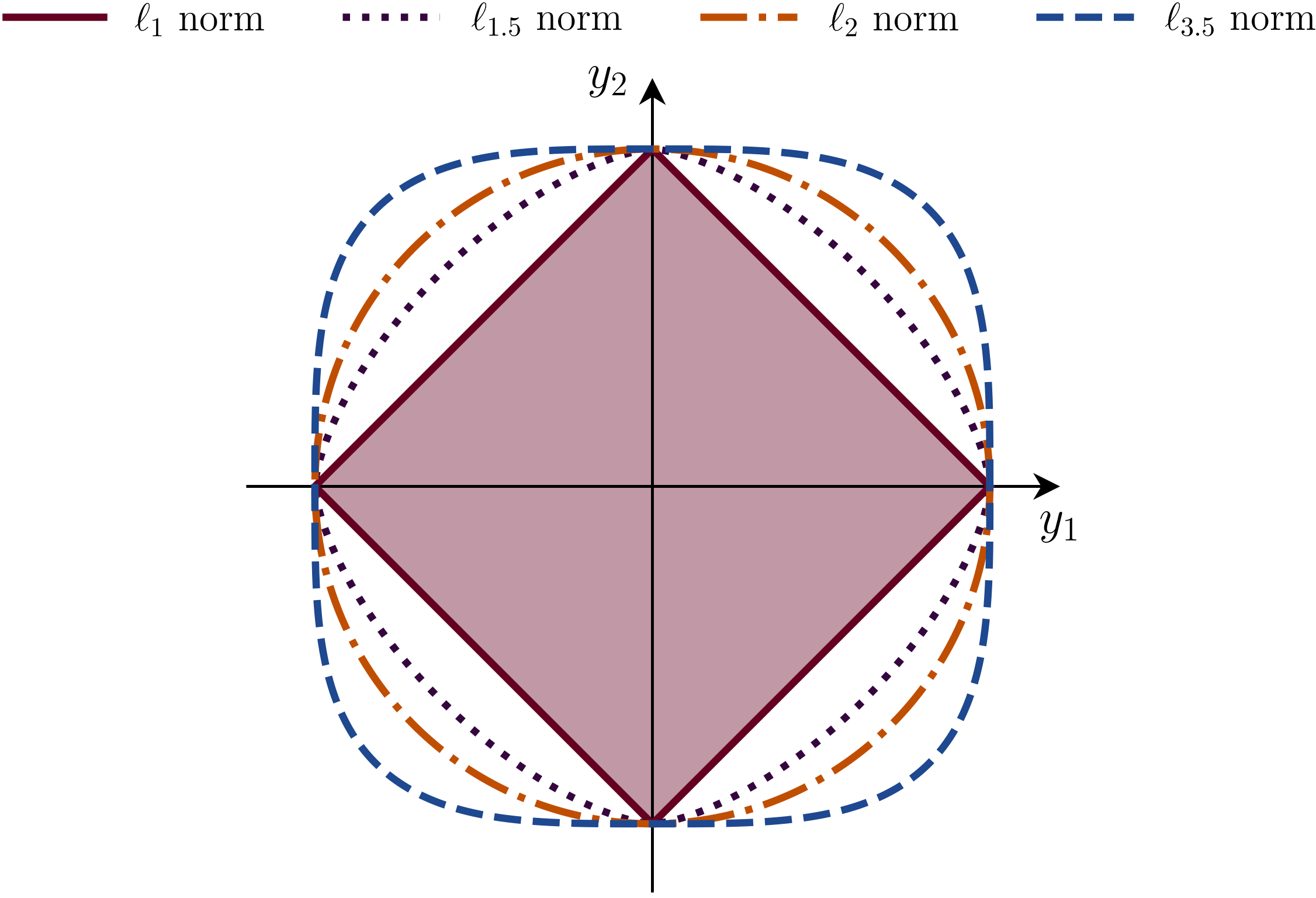}}}%
    \quad
    \subfloat[The Hausdorff distance \eqref{GlobalMaxNormBalls} for fixed $p_1=1$.]{{\includegraphics[width=0.47\linewidth]{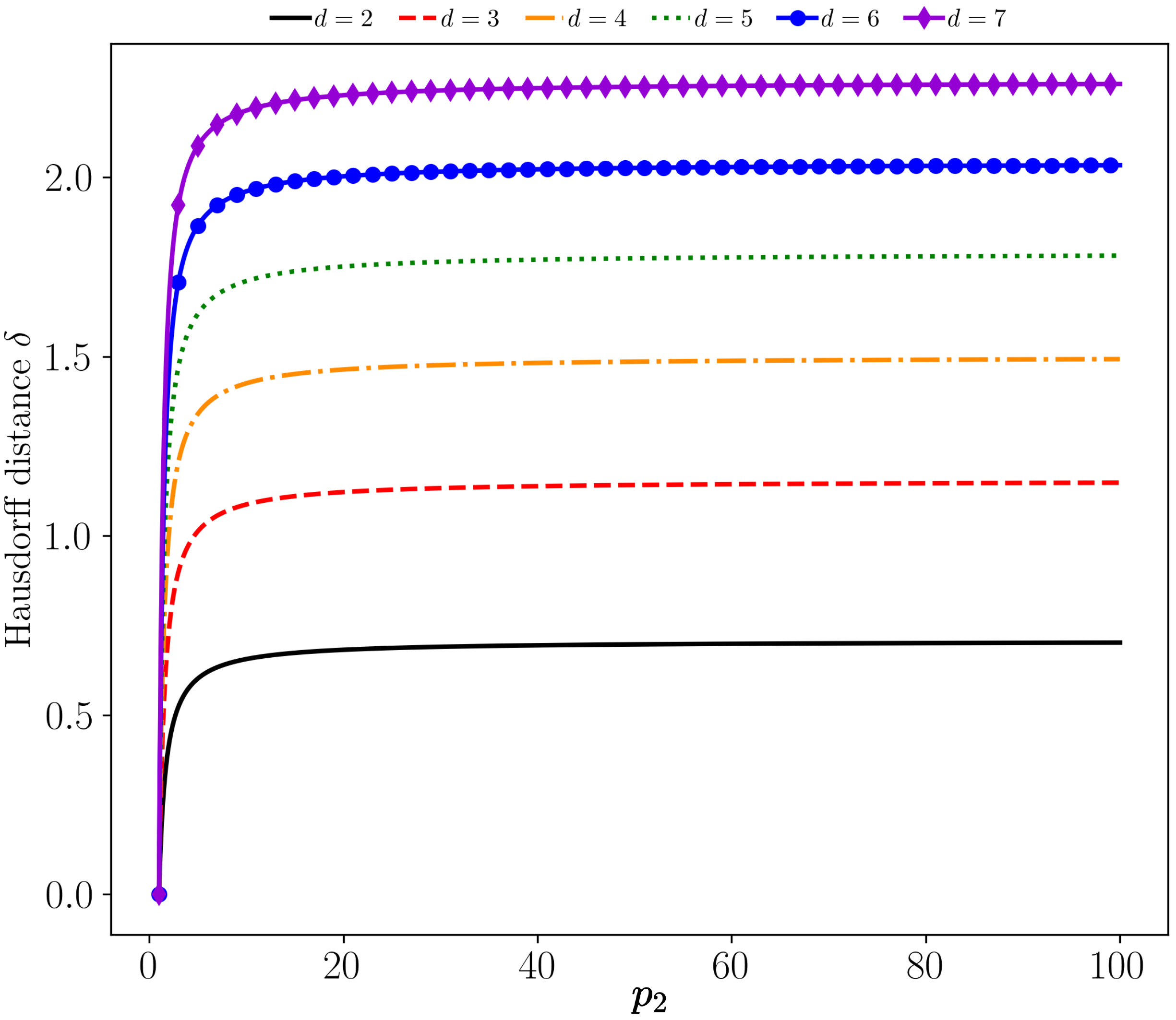} }}%
    \caption{Understanding the Hausdorff distance $\delta$ between the unit $\ell_{p_1}$ and $\ell_{p_2}$ norm balls in $\mathbb{R}^{d}$, $d\geq 2$, for  $1\leq p_1 < p_2 \leq \infty$.}%
    \label{fig:NormBalls}%
\end{figure}

We can compute the \emph{global maximum} value achieved in \eqref{HausdorffbetnNormBallsFinal} using the norm inequality
\begin{align}
\|\cdot\|_{q_{2}} \leq d^{\frac{1}{q_{2}} - \frac{1}{q_{1}}}\|\cdot\|_{q_{1}}, \quad 1 \leq q_2 < q_1 \leq \infty,
\label{NormUpperBound}
\end{align}
which follows from the H\"{o}lder's inequality: 
$$\displaystyle\sum_{i=1}^{d} \vert a_{i}b_{i}\vert \leq \left(\displaystyle\sum_{i=1}^{d} \vert a_i\vert^{r}\right)^{\frac{1}{r}} \left(\displaystyle\sum_{i=1}^{d} \vert b_i\vert^{\frac{r}{r-1}}\right)^{1 - \frac{1}{r}} \quad \bm{a},\bm{b}\in\mathbb{R}^{d}, \quad 1\leq r \leq \infty,$$
where the exponents $r$ and $\frac{r}{r-1}$ are H\"{o}lder conjugates. Applying this inequality with $\vert a_i \vert =\vert x_i\vert^{q_2}$, $\vert b_i\vert =1$, $r=q_1/q_2 > 1$, results in \eqref{NormUpperBound}.

In $\mathbb{R}^{d}$, the constant $d^{1/q_{2} - 1/q_{1}}$ is sharp because the equality in \eqref{NormUpperBound} is achieved by any vector in $\{-1,1\}^{d}$. Since \eqref{HausdorffbetnNormBallsFinal} has constraint $\|\bm{y}\|_{2}=1$, the corresponding global maximum will be achieved by 
$$\bm{y}^{\rm{\max}}\in\mathcal{Y}^{\max}:=\{\bm{u}\in\mathbb{S}^{d-1}\mid \bm{u} = \rho\bm{v}, \bm{v}\in\{-1,1\}^{d}, \rho>0\}.$$
The scalar $\rho$ is determined by the normalization constraint $\|\bm{y}^{\rm{\max}}\|_{2}=1$ as $\rho=1/\sqrt{d}$. Thus, we obtain 
\begin{align}
\delta &=\quad\underset{\|\bm{y}\|_{2} = 1}{\sup}\quad \left(\|\bm{y}\|_{q_{2}}- \|\bm{y}\|_{q_{1}}\right),   \qquad 1 \leq q_2 < q_1 \leq \infty,\nonumber\\
&= \left(d^{\frac{1}{q_{2}} - \frac{1}{q_{1}}} - 1\right)\underbrace{\|\bm{y}^{\max}\|_{q_{1}}}_{= \rho d^{\frac{1}{q_{1}}}}=d^{-\frac{1}{2}}\left(d^{\frac{1}{q_{2}}} - d^{\frac{1}{q_{1}}}\right),
\label{GlobalMaxNormBalls}
\end{align}
where in the last line we substituted $\rho=1/\sqrt{d}$. The cardinality of $\mathcal{Y}^{\max}$ equals $2^{d}$, i.e., there are $2^{d}$ global maximizers $\bm{y}^{\rm{\max}}\in\mathbb{S}^{d-1}$ achieving the value \eqref{GlobalMaxNormBalls}.

We summarize the above in the following Proposition.
\begin{proposition}\label{prop:HausdorffpNormBallsdifferentp}
For $1\leq p_1 < p_2 \leq \infty$, we have
\begin{align}
\delta\left(\mathbb{B}_{p_1}^d, \mathbb{B}_{p_2}^d\right) = d^{-\frac{1}{2}}\left(d^{\frac{1}{q_{2}}} - d^{\frac{1}{q_{1}}}\right),
\label{FinalFormulaHausdorffpNormBalls}
\end{align}
where $q_i$ denotes the H\"{o}lder conjugate of $p_i$ for $i\in\{1,2\}$.
\end{proposition}

\begin{remark}\label{remark:geometrynormballdelta}

As the intuition suggests, for a fixed $p_{1}$, larger $p_{2}$ results in a larger $\delta$ in a given dimension $d\geq 2$; see Fig. \ref{fig:NormBalls}.
\end{remark}

Fig. \ref{fig:NormBallContourPlot} shows the contour plot of $\|\bm{y}\|_{1}-\|\bm{y}\|_{2}$ in the spherical coordinates for $d=3$, i.e., $\bm{y}\in\mathbb{S}^{2}$. As predicted by \eqref{GlobalMaxNormBalls}, in this case, there are eight maximizers achieving the global maximum value $\sqrt{3}-1 \approx 0.7321$. The symmetric sub-landscapes mentioned earlier are also evident in Fig. \ref{fig:NormBallContourPlot}.

\begin{figure}%
    \centering
    \includegraphics[width=0.85\linewidth]{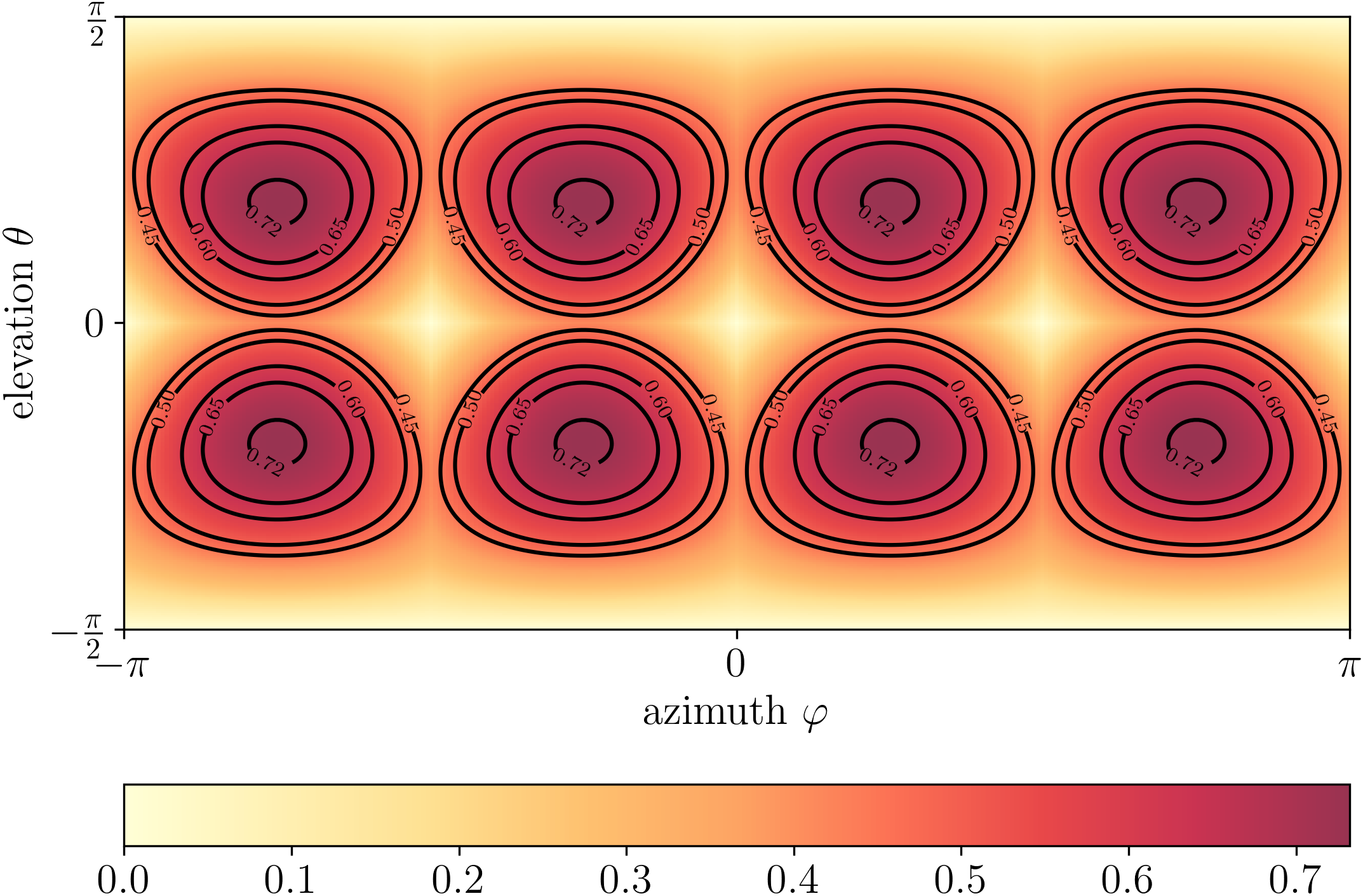}
    \caption{The landscape of the objective in \eqref{HausdorffbetnNormBallsFinal} for $d=3$, $q_1=2$, and $q_2=1$.}%
    \label{fig:NormBallContourPlot}%
\end{figure}

\subsection{Hausdorff Distance between Polyhedral $D$-Norm Balls} 
We next show that similar arguments as above can be used to derive the Hausdorff distance between other type of norm balls such as the $D$-norm balls which are certain polyhedral norm balls. The $D$-norms and norm balls arise naturally in robust optimization, see e.g., \cite[Sec. 2.2]{rahal2021norm}, \cite{bertsimas2004robust}. The $D$-norm in $\mathbb{R}^{d}$ is parameterized by $k$, where $1\leq k\leq d$, as defined next.
\begin{definition}(\textbf{$D$-norm})
For $1\leq k\leq  d$, the $D$-norm of $\bm{x}\in\mathbb{R}^{d}$ is
\begin{align}
\|\bm{x}\|_{k}^{D}:=\max _{\{S \cup\{t\} \mid S \subseteq \llbracket d\rrbracket, {\rm{card}}\left(S\right) \leq\lfloor k\rfloor, t \in \llbracket d\rrbracket \setminus S\}}\left\{\sum_{i \in S}\lvert x_{i} \rvert+(k-\lfloor k\rfloor)\lvert x_{t}\rvert \right\}.
\label{defDnorm}
\end{align}
\end{definition}
For $k=1$, the norm \eqref{defDnorm} reduces to the $\ell_{\infty}$ norm, i.e., $\|\bm{x}\|_{1}^{D} = \|\bm{x}\|_{\infty}$. For $k=d$, the norm \eqref{defDnorm} reduces to the $\ell_{1}$ norm, i.e., $\|\bm{x}\|_{d}^{D} = \|\bm{x}\|_{1}$. For $1<k<d$, the norm \eqref{defDnorm} can be thought of as a polyhedral interpolation between the $\ell_{\infty}$ and the $\ell_{1}$ norms. For a plot of the unit $D$-norm balls in $\mathbb{R}^{2}$, we refer the readers to \cite[Fig. 2]{rahal2021norm}.

A special case of \eqref{defDnorm} is when the parameter $k$ is restricted to be a natural number, i.e., $k\in\llbracket d\rrbracket$. Then the $D$-norm reduces to the so-called $k$ largest magnitude norm, defined next. 
\begin{definition}(\textbf{$k$ largest magnitude norm})
For $k\in\llbracket d\rrbracket$, the $k$ largest magnitude norm of $\bm{x}\in\mathbb{R}^{d}$ is
\begin{align}
\|\bm{x}\|_{[k]}:=\lvert x_{i_{1}}\rvert + \lvert x_{i_{2}}\rvert +\hdots+ \lvert x_{i_{k}}\rvert,
\label{defklargestmagnorm}
\end{align}
where the inequality $\lvert x_{i_{1}} \rvert \geq \lvert x_{i_{2}}\rvert \geq\hdots\geq \lvert x_{i_{d}}\rvert$ denotes the ordering of the magnitudes of the entries in $\bm{x}$.
\end{definition}
It is easy to verify that \eqref{defDnorm} (and thus its special case \eqref{defklargestmagnorm}) is indeed a norm, and its dual norm equals \cite[Prop. 2]{bertsimas2004robust}
$$\left(\|\cdot\|_{k}^{D}\right)^{*}(\bm{y}) = \max\bigg\{\dfrac{1}{k}\|\bm{y}\|_{1},\|\bm{y}\|_{\infty}\bigg\}.$$ 
For a comparison of the $D$-norm and its dual with the Euclidean norm, see \cite[Prop. 3]{bertsimas2004robust}. We have the following result.
\begin{proposition}\label{prop:ExactHausdorffkLargestMagNorm}(\textbf{Hausdorff distance $\delta$ between unit $D$-norm balls})
Let $1\leq k_1 < k_2 \leq d$, and let $\mathcal{K}_1,\mathcal{K}_2\subset\mathbb{R}^{d}$ denote the unit $\|\cdot\|_{k_{1}}^{D}$ and $\|\cdot\|_{k_{2}}^{D}$ norm balls in $\mathbb{R}^{d}$, i.e., $\mathcal{K}_1 \equiv \mathbb{B}_{\|\cdot\|_{k_1}^D}^{d}, \mathcal{K}_2 \equiv \mathbb{B}_{\|\cdot\|_{k_2}^D}^{d}$. Then
\begin{align}
\delta\left(\mathcal{K}_1,\mathcal{K}_2\right) = \delta\left(\mathbb{B}_{\|\cdot\|_{k_1}^D}^{d},\mathbb{B}_{\|\cdot\|_{k_2}^D}^{d}\right) = \left(\dfrac{1}{k_1}-\dfrac{1}{k_2}\right)\sqrt{d}.
\label{ExactFormulaHausdorffklargestmagnormdiff}
\end{align}
\end{proposition}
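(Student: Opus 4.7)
The plan is to mirror the strategy used for the $\ell_p$ case, namely to pass to support functions via \eqref{HausdorffSptFn} and reduce the computation to maximizing a difference of norms on the unit Euclidean sphere. Since $k_1<k_2$, one checks (via the extremal cases $k=1\leadsto\|\cdot\|_\infty$ and $k=d\leadsto\|\cdot\|_1$) that $\|\cdot\|_{k_1}^{D}\leq\|\cdot\|_{k_2}^{D}$, so $\mathcal{K}_{2}\subseteq\mathcal{K}_{1}$. This lets one drop the absolute value in \eqref{HausdorffSptFn}, and using the dual-norm expression quoted from \cite[Prop. 2]{bertsimas2004robust} we may write
\begin{align*}
\delta(\mathcal{K}_{1},\mathcal{K}_{2})=\underset{\|\bm{y}\|_{2}=1}{\sup}\left[\max\bigg\{\tfrac{1}{k_{1}}\|\bm{y}\|_{1},\|\bm{y}\|_{\infty}\bigg\}-\max\bigg\{\tfrac{1}{k_{2}}\|\bm{y}\|_{1},\|\bm{y}\|_{\infty}\bigg\}\right].
\end{align*}

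Next I would establish the upper bound via an elementary inequality: for any $a\geq c$ and arbitrary $b\in\mathbb{R}$, one has $\max(a,b)-\max(c,b)\leq a-c$. A three-case check (according to whether $b\leq c$, $c<b\leq a$, or $b>a$) confirms this. Applying the inequality with $a=\|\bm{y}\|_{1}/k_{1}\geq \|\bm{y}\|_{1}/k_{2}=c$ and $b=\|\bm{y}\|_{\infty}$ yields the key pointwise bound
\begin{align*}
\max\bigg\{\tfrac{1}{k_{1}}\|\bm{y}\|_{1},\|\bm{y}\|_{\infty}\bigg\}-\max\bigg\{\tfrac{1}{k_{2}}\|\bm{y}\|_{1},\|\bm{y}\|_{\infty}\bigg\}\leq\left(\tfrac{1}{k_{1}}-\tfrac{1}{k_{2}}\right)\|\bm{y}\|_{1}.
\end{align*}
Combining this with the standard norm inequality $\|\bm{y}\|_{1}\leq\sqrt{d}\,\|\bm{y}\|_{2}=\sqrt{d}$ on the unit sphere gives $\delta(\mathcal{K}_{1},\mathcal{K}_{2})\leq\left(1/k_{1}-1/k_{2}\right)\sqrt{d}$.

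For the matching lower bound, I would exhibit the equality-achieving direction $\bm{y}^{\star}=(1,1,\ldots,1)/\sqrt{d}\in\mathbb{S}^{d-1}$, for which $\|\bm{y}^{\star}\|_{1}=\sqrt{d}$ and $\|\bm{y}^{\star}\|_{\infty}=1/\sqrt{d}$. Since $k_{j}\leq d$ for $j\in\{1,2\}$, we have $\|\bm{y}^{\star}\|_{1}/k_{j}=\sqrt{d}/k_{j}\geq 1/\sqrt{d}=\|\bm{y}^{\star}\|_{\infty}$, so the $\ell_{1}/k_{j}$ branch wins inside both maxes and the difference evaluates to exactly $(1/k_{1}-1/k_{2})\sqrt{d}$. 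This proves \eqref{ExactFormulaHausdorffklargestmagnormdiff}.

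There is no real obstacle: the only substantive step is recognizing the elementary $\max$-inequality, which serves the same role that the H\"{o}lder-type bound \eqref{NormUpperBound} played in the $\ell_p$ case. One incidental bonus of the argument is that, as in \eqref{GlobalMaxNormBalls}, the set of global maximizers on $\mathbb{S}^{d-1}$ includes all sign-permuted copies of $\bm{y}^{\star}$, reflecting the same $2^{d}$-fold symmetry of the objective.
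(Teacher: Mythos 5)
Your proposal is correct and follows essentially the same route as the paper: both reduce to the dual-norm expression $\max\{\|\bm{y}\|_{1}/k_i,\|\bm{y}\|_{\infty}\}$, establish the pointwise bound $h_1(\bm{y})-h_2(\bm{y})\leq(1/k_1-1/k_2)\|\bm{y}\|_{1}$ via the same three-case analysis (your $\max(a,b)-\max(c,b)\leq a-c$ lemma is exactly the paper's case split on where $\|\bm{y}\|_{\infty}$ falls), and then invoke $\sup_{\|\bm{y}\|_2=1}\|\bm{y}\|_1=\sqrt{d}$. Your explicit verification that the maximizer $\bm{y}^{\star}=\mathbf{1}/\sqrt{d}$ lands in the equality branch of both maxima (using $k_j\leq d$) is a welcome touch that the paper leaves implicit.
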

\begin{proof}
Let $h_1(\cdot),h_2(\cdot)$ denote the support functions of $\mathcal{K}_1,\mathcal{K}_2$, respectively. Using the definition of dual norm, we have
\begin{align}
h_{1}(\bm{y}) = \max\bigg\{\dfrac{1}{k_1}\|\bm{y}\|_{1},\|\bm{y}\|_{\infty}\bigg\}, \quad h_{2}(\bm{y}) = \max\bigg\{\dfrac{1}{k_2}\|\bm{y}\|_{1},\|\bm{y}\|_{\infty}\bigg\}.    
\label{SptFnklargestmag}    
\end{align}
Recall that $\delta$ relates to $h_1,h_2$ via \eqref{HausdorffSptFn}. Since $1\leq k_1 < k_2 \leq d$, we know that $\dfrac{1}{k_2}\|\bm{y}\|_{1} < \dfrac{1}{k_1}\|\bm{y}\|_{1}$. Depending on the value of $\|\bm{y}\|_{\infty}$, we need to consider three subsets of unit vectors.

Specifically, for the unit vectors $\bm{y}$ satisfying $\dfrac{1}{k_2}\|\bm{y}\|_{1} < \dfrac{1}{k_1}\|\bm{y}\|_{1} \leq \|\bm{y}\|_{\infty}$, we have $h_1(\bm{y})=h_{2}(\bm{y})=\|\bm{y}\|_{\infty}$ and $h_1(\bm{y})-h_2(\bm{y})=0$.

On the other hand, for the unit vectors $\bm{y}$ satisfying $\|\bm{y}\|_{\infty}\leq \dfrac{1}{k_2}\|\bm{y}\|_{1} < \dfrac{1}{k_1}\|\bm{y}\|_{1}$, we have $h_1(\bm{y}) = \dfrac{1}{k_1}\|\bm{y}\|_{1}$, $h_2(\bm{y}) = \dfrac{1}{k_2}\|\bm{y}\|_{1}$, hence we obtain that $h_1(\bm{y})-h_2(\bm{y})=(1/k_1 - 1/k_2)\|\bm{y}\|_{1}$, which is always nonnegative.

Finally, for the unit vectors $\bm{y}$ satisfying $\dfrac{1}{k_2}\|\bm{y}\|_{1} \leq \|\bm{y}\|_{\infty}  < \dfrac{1}{k_1}\|\bm{y}\|_{1}$, we must have $h_1(\bm{y})-h_2(\bm{y})=\dfrac{1}{k_1}\|\bm{y}\|_{1} - \|\bm{y}\|_{\infty} < (1/k_1 - 1/k_2)\|\bm{y}\|_{1}$. 

Therefore, using \eqref{HausdorffSptFn} we get
$$\delta\left(\mathcal{K}_1,\mathcal{K}_2\right) = \left(\dfrac{1}{k_1}-\dfrac{1}{k_2}\right)\underset{\|\bm{y}\|_2 = 1}{\sup}\|\bm{y}\|_{1}.$$
Using the same arguments as in \eqref{GlobalMaxNormBalls}, we obtain $\underset{\|\bm{y}\|_2 = 1}{\sup}\|\bm{y}\|_{1} = \sqrt{d}$, which is achieved by $2^{d}$ vectors of the form $\bm{v}/\sqrt{d}$ for all $\bm{v}\in\{-1,1\}^{d}$. This completes the proof.\end{proof}

\begin{figure}%
    \centering
    \includegraphics[width=0.85\linewidth]{ 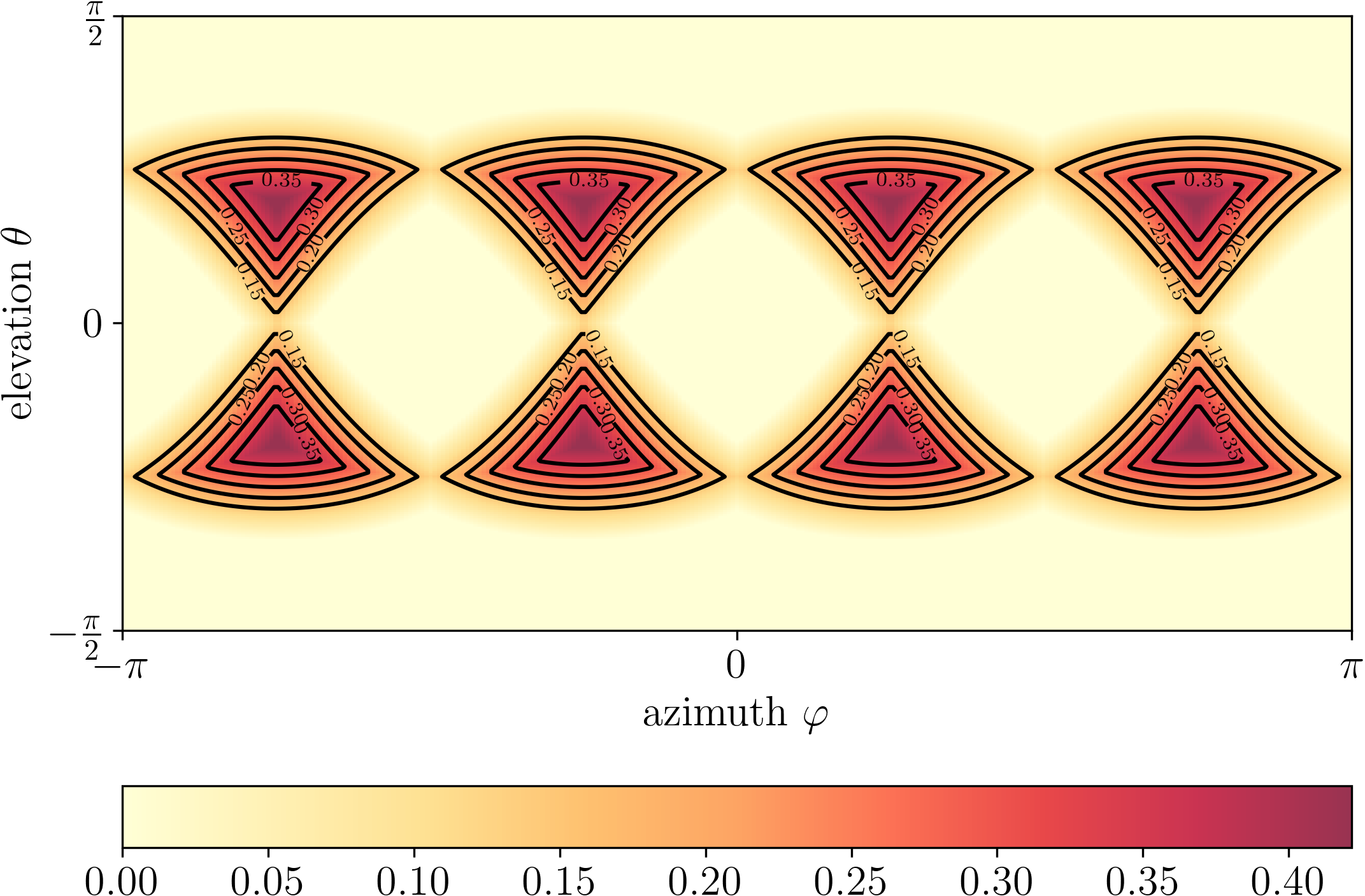}
    \caption{The landscape of the objective in \eqref{HausdorffSptFn} with $h_1,h_2$ as in \eqref{SptFnklargestmag} for $d=3$, $k_1=1.7$, and $k_2=2.9$.}%
    \label{fig:klargestNormBallContourPlot}%
\end{figure}

Fig. \ref{fig:klargestNormBallContourPlot} shows the landscape of the objective for computing the Hausdorff distance between the unit $D$-norm balls with $k_1=1.7$ and $k_2=2.9$ in $d=3$ dimensions, and as explained in the proof above, there are eight global maximizers given by $\bm{v}/\sqrt{3}$ for all $\bm{v}\in\{-1,1\}^{3}$. In this case, the formula \eqref{ExactFormulaHausdorffklargestmagnormdiff} gives $\delta=120\sqrt{3}/493 \approx 0.421594517055305$ while the direct numerical estimate of $\delta$ from the contours yields $0.421577951149235$.

\section{Composition with a Linear Map}\label{sec:ComposeWithLinear}
We next consider a generalized version of \eqref{HausdorffbetnNormBallsFinal} given by
\begin{align}
\label{HausdorffbetnNormBallsComposedWithLinear}  
\delta\left(\mathcal{K}_{1},\mathcal{K}_{2}\right) = \underset{\|\bm{y}\|_{2} = 1}{\sup}\quad \left(\|\bm{T}\bm{y}\|_{q_{2}}- \|\bm{T}\bm{y}\|_{q_{1}}\right),  \qquad 1 \leq q_{2} < q_{1} \leq \infty,    
\end{align}
where the matrix $\bm{T}\in\mathbb{R}^{m\times d}$, $m\leq d$, has full row rank $m$. Using \eqref{SptFnLinearMap}, we can interpret \eqref{HausdorffbetnNormBallsComposedWithLinear} as follows. As before, let $p_1,p_2$ denote the H\"{o}lder conjugates of $q_1,q_2$, respectively. Then \eqref{HausdorffbetnNormBallsComposedWithLinear} computes the Hausdorff distance between two compact convex sets in $\mathbb{R}^{d}$ obtained as the linear transformations of the $m$-dimensional $\ell_{p_{1}}$ and $\ell_{p_{2}}$ unit norm balls via $\bm{T}^{\top}\in\mathbb{R}^{d\times m}$, i.e., $$\mathcal{K}_{1}\equiv \bm{T}^{\top} \mathbb{B}^{d}_{p_{1}}, \quad\mathcal{K}_{2}\equiv \bm{T}^{\top} \mathbb{B}^{d}_{p_2}.$$
Since the right pseudo-inverse  $\bm{T}^{\dagger} = \bm{T}^{\top}\left(\bm{T}\bm{T}^{\top}\right)^{-1}$, one can equivalently view \eqref{HausdorffbetnNormBallsComposedWithLinear} as that of maximizing the difference between the $\ell_{p_{1}}$ and $\ell_{p_{2}}$ norms over the $m$-dimensional origin-centered ellipsoid with shape matrix $\bm{TT}^{\top}$.

As was the case in \eqref{HausdorffbetnNormBallsFinal}, problem \eqref{HausdorffbetnNormBallsComposedWithLinear} is a DC programming problem with nonconvex objective. However, unlike \eqref{HausdorffbetnNormBallsFinal}, now there is no obvious symmetry in the objective's landscape that can be leveraged because the number and locations of the local maxima or saddles have sensitive dependence on the matrix parameter $\bm{T}$; see the first column of Table \ref{table:1}. Thus, directly using off-the-shelf solvers such as \cite{dccpGitRepo,shen2016disciplined} or nonconvex search algorithms become difficult for solving \eqref{HausdorffbetnNormBallsComposedWithLinear} in practice as the iterative search may get stuck in a local stationary point.

\begin{remark}\label{remark:linearmapofDnorm}
We can also consider the Hausdorff distance between the common linear transforms of different polyhedral $D$ norm balls discussed earlier. Specifically, if $\mathcal{K}_1, \mathcal{K}_2$ are the unit $\|\cdot\|_{k_1}^{D}, \|\cdot\|_{k_2}^{D}$ norm balls for $1\leq k_1 < k_2 \leq d$, then following \eqref{SptFnLinearMap} and the same steps in the proof of Proposition \ref{prop:ExactHausdorffkLargestMagNorm}, the Hausdorff distance $\delta$ between the sets $\bm{T}\mathcal{K}_1,\bm{T}\mathcal{K}_2$ equals
\begin{align}
\delta\left(\bm{T}\mathcal{K}_1,\bm{T}\mathcal{K}_2\right) = \left(\dfrac{1}{k_1} - \dfrac{1}{k_2}\right)\underbrace{\underset{\|\bm{y}\|_2 = 1}{\sup}\|\bm{T^{\top}y}\|_{1}}_{=:\|\bm{T}^{\top}\|_{2\rightarrow 1}} = \left(\dfrac{1}{k_1} - \dfrac{1}{k_2}\right)\|\bm{T}\|_{\infty\rightarrow 2},
\label{HausdorffbetnLinearMapofDnormBalls}    
\end{align}
where the last equality follows from the relation between the induced norm of an operator and that of its adjoint.
\end{remark}
\subsection{Estimates for Arbitrary {\boldmath $T$}} 
We next provide an upper bound for  \eqref{HausdorffbetnNormBallsComposedWithLinear} in terms of the operator norm $\|\bm{T}\|_{2\rightarrow q_{1}}$.
\begin{proposition}\label{prop:upperbound} (\textbf{Upper bound})
Let $\bm{T}\in\mathbb{R}^{m\times d}$. Then for $1 \leq q_{2} < q_{1} \leq \infty$, we have 
\begin{align}
\underset{\|\bm{y}\|_{2}=1}{\sup}\left(\|\bm{Ty}\|_{q_{2}} - \|\bm{Ty}\|_{q_{1}}\right) \leq \left(m^{\frac{1}{q_{2}}-\frac{1}{q_{1}}}-1\right) \|\bm{T}\|_{2\rightarrow q_{1}}.\label{UpperBoundLinearComposition} \end{align}
\end{proposition}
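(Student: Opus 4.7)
The plan is to reduce the bound to a single, pointwise application of the norm-equivalence inequality that was already invoked in equation \eqref{NormUpperBound}, namely
$$\|\bm{x}\|_p \leq m^{1/p - 1/q}\|\bm{x}\|_q \qquad \text{for all } \bm{x}\in\mathbb{R}^{m},\; 1 \leq p < q < \infty,$$
with $m$ (the codomain dimension of $\bm{T}$) playing the role that $d$ played in \eqref{NormUpperBound}. This is the right dimension to use because $\bm{Ty}\in\mathbb{R}^{m}$.

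First I would apply the inequality above pointwise with $\bm{x} := \bm{Ty}$, which after rearrangement gives
$$\|\bm{Ty}\|_{p} - \|\bm{Ty}\|_{q} \;\leq\; \left(m^{1/p - 1/q}-1\right)\|\bm{Ty}\|_{q}$$
for every $\bm{y}\in\mathbb{R}^{d}$. The prefactor $m^{1/p-1/q}-1$ is nonnegative since $p<q$, so this inequality is preserved under taking suprema.

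Next I would take the supremum of both sides over the unit sphere $\|\bm{y}\|_2=1$. The left-hand side becomes exactly the quantity of interest in \eqref{HausdorffbetnNormBallsComposedWithLinear}, while on the right-hand side the constant factor pulls out, leaving $\sup_{\|\bm{y}\|_2=1}\|\bm{Ty}\|_{q}$. By the definition \eqref{defInducedOpNorm} of the induced operator norm, this supremum is precisely $\|\bm{T}\|_{2\to q}$, yielding \eqref{UpperBoundLinearComposition}.

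There is no real obstacle here: the proof is a two-line estimate. The only subtlety worth flagging is the choice of $m$ rather than $d$ in the scaling constant, which relies on applying \eqref{NormUpperBound} in the codomain $\mathbb{R}^{m}$ of $\bm{T}$ rather than in the domain $\mathbb{R}^{d}$; using $d$ would also give a valid but strictly weaker bound when $m<d$. Sharpness is not claimed in the statement, so no matching lower bound is needed.
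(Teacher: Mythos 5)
Your proposal is correct and follows essentially the same route as the paper's proof: apply the norm-equivalence inequality \eqref{NormUpperBound} in the codomain $\mathbb{R}^{m}$ to $\bm{Ty}$, rearrange, and then take suprema over the unit sphere to identify $\|\bm{T}\|_{2\rightarrow q}$ via \eqref{defInducedOpNorm}. Your remark about using $m$ rather than $d$ is exactly the point the paper implicitly relies on as well.
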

\begin{proof}
Proceeding as in Sec. \ref{sec:NormBalls}, for $\bm{y}\in\mathbb{S}^{d-1}$ we get
\begin{align}
\|\bm{Ty}\|_{q_{2}} &\leq m^{\frac{1}{q_{2}}-\frac{1}{q_{1}}} \|\bm{Ty}\|_{q_{1}}\nonumber\\
\Rightarrow \|\bm{Ty}\|_{q_{2}} - \|\bm{Ty}\|_{q_{1}} &\leq \left(m^{\frac{1}{q_{2}}-\frac{1}{q_{1}}}-1\right)\|\bm{Ty}\|_{q_{1}}\nonumber\\
&\leq \left(m^{\frac{1}{q_{2}}-\frac{1}{q_{1}}}-1\right)\underset{\|\bm{y}\|_{2}=1}{\sup}\|\bm{Ty}\|_{q_{1}} = \left(m^{\frac{1}{q_{2}}-\frac{1}{q_{1}}}-1\right) \|\bm{T}\|_{2\rightarrow q_{1}}\nonumber\\
&\kern-8.5em\Rightarrow \underset{\|\bm{y}\|_{2}=1}{\sup}\left(\|\bm{Ty}\|_{q_{2}} - \|\bm{Ty}\|_{q_{1}}\right) \leq \left(m^{\frac{1}{q_{2}}-\frac{1}{q_{1}}}-1\right) \|\bm{T}\|_{2\rightarrow q_{1}}.\nonumber
\end{align}
\end{proof}
Recall that $1 < q_{1} \leq \infty$. When $1 \leq q_{2} < q_{1} \leq 2$, the operator norm $\|\bm{T}\|_{2\rightarrow q_{1}}$ is, in general, NP hard to compute \cite{hendrickx2010matrix,steinberg2005computation,bhaskara2011approximating} except in the well-known case $q_1=2$ for which $\|\bm{T}\|_{2\rightarrow 2} = \sigma_{\max}(\bm{T})$, the maximum singular value of $\bm{T}$. When $1 \leq q_{2} \leq 2 < q_{1} \leq \infty$, the norm $\|\bm{T}\|_{2\rightarrow q_{1}}$ is often referred to as hypercontractive \cite{barak2012hypercontractivity}, and its computation for generic $\bm{T}\in\mathbb{R}^{m\times d}$ is relatively less explored (see e.g., \cite{barak2012hypercontractivity,bhattiprolu2019approximability}) except for the case $q_{1}=\infty$ for which $\bm{T}_{2\rightarrow\infty} = \max_{i=1,\hdots,m}\|\bm{T}(i,:)\|_2$ (maximum $\ell_2$ norm of a row). Hypercontractive norms and related inequalities find applications in establishing rapid mixing of random walks as well as several problems of interest in theoretical computer science \cite{gross1975logarithmic,saloff1997lectures,biswal2011hypercontractivity,barak2012hypercontractivity}.

Table \ref{table:1} reports our numerical experiments to estimate \eqref{HausdorffbetnNormBallsComposedWithLinear} with $q_{1}=2,q_{2}=1$, for five random realizations of $\bm{T}\in\mathbb{R}^{3\times 3}$, arranged as the rows of Table \ref{table:1}. For visual clarity, the contour plots in the first column of Table \ref{table:1} depict only four high-magnitude contour levels. These results suggest that the landscape of the nonconvex objective in \eqref{HausdorffbetnNormBallsComposedWithLinear} has sensitive dependence on the mapping $\bm{T}$.

We can say more for specific classes of $\bm{T}$. For example, notice from \eqref{UpperBoundLinearComposition} that if the mapping $\bm{T}:\ell_{q}(\mathbb{R}^{d})\mapsto\ell_{q}\left(\mathbb{R}^{m}\right)$ is an isometry, i.e., $\|\bm{Ty}\|_{q} = \|\bm{y}\|_{q}$, then the upper bound is achieved by any $\bm{y}\in\mathbb{R}^{d}$ such that $\sqrt{d}\bm{y}\in\{-1,1\}^{d}$ as in Sec. \ref{sec:NormBalls}, and we recover the exact formula \eqref{FinalFormulaHausdorffpNormBalls}. We can characterize these isometric maps as follows.

\begin{table}
\centering
\begin{tabular}{|P{0.43\textwidth}|P{0.2\textwidth}|P{0.2\textwidth}|}
 \hline
 Landscape & Estimated max. & Upper bound \eqref{UpperBoundLinearComposition} \\ 
 \hline\hline
 
\raisebox{-0.6\totalheight}{\includegraphics[width=0.42\textwidth]{ 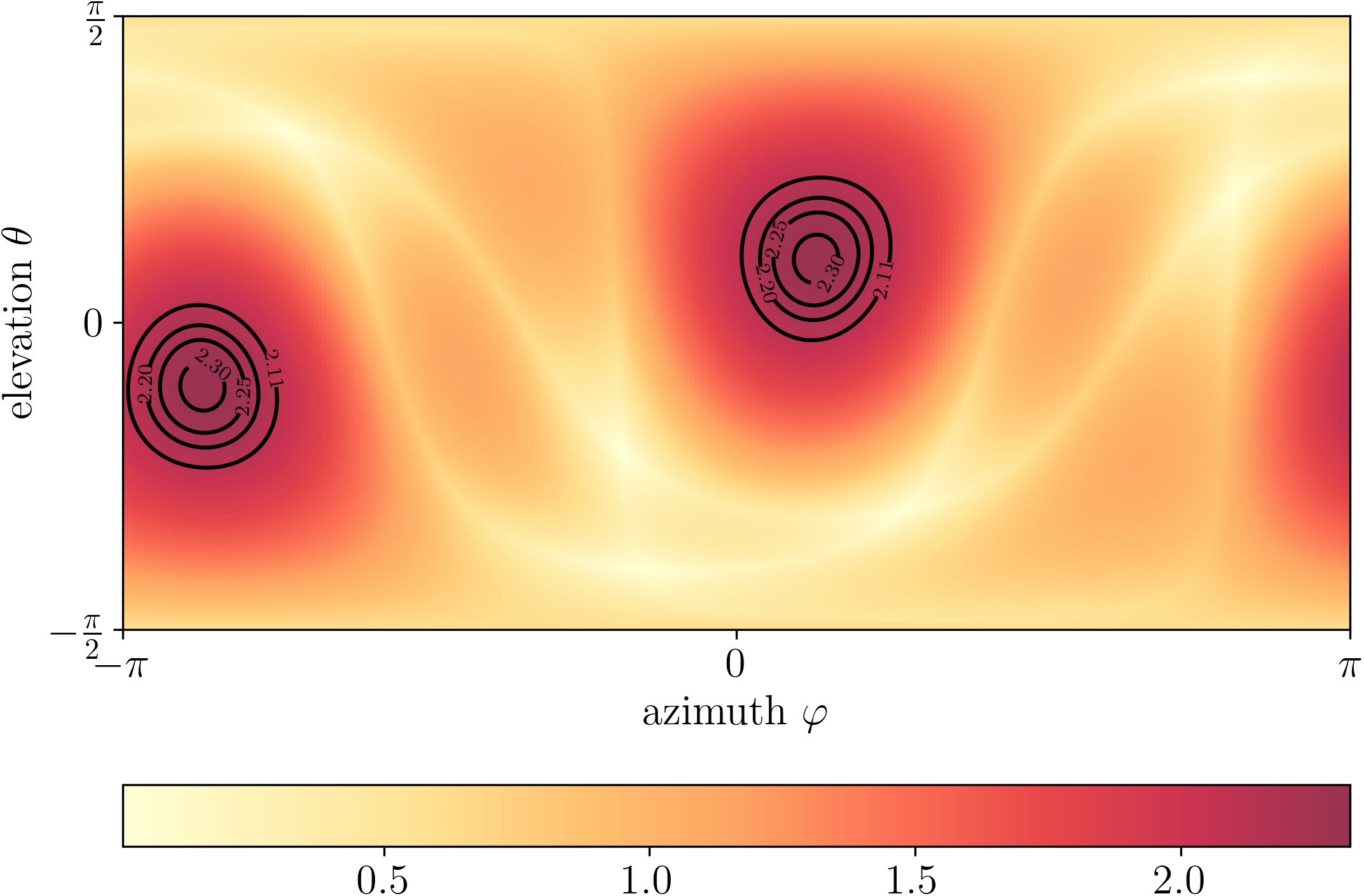}} & 2.318732079842860 & 2.384527280099902 \rule[-14ex]{0pt}{0pt}\\
  \hline
\raisebox{-0.6\totalheight}{\includegraphics[width=0.42\textwidth]{ 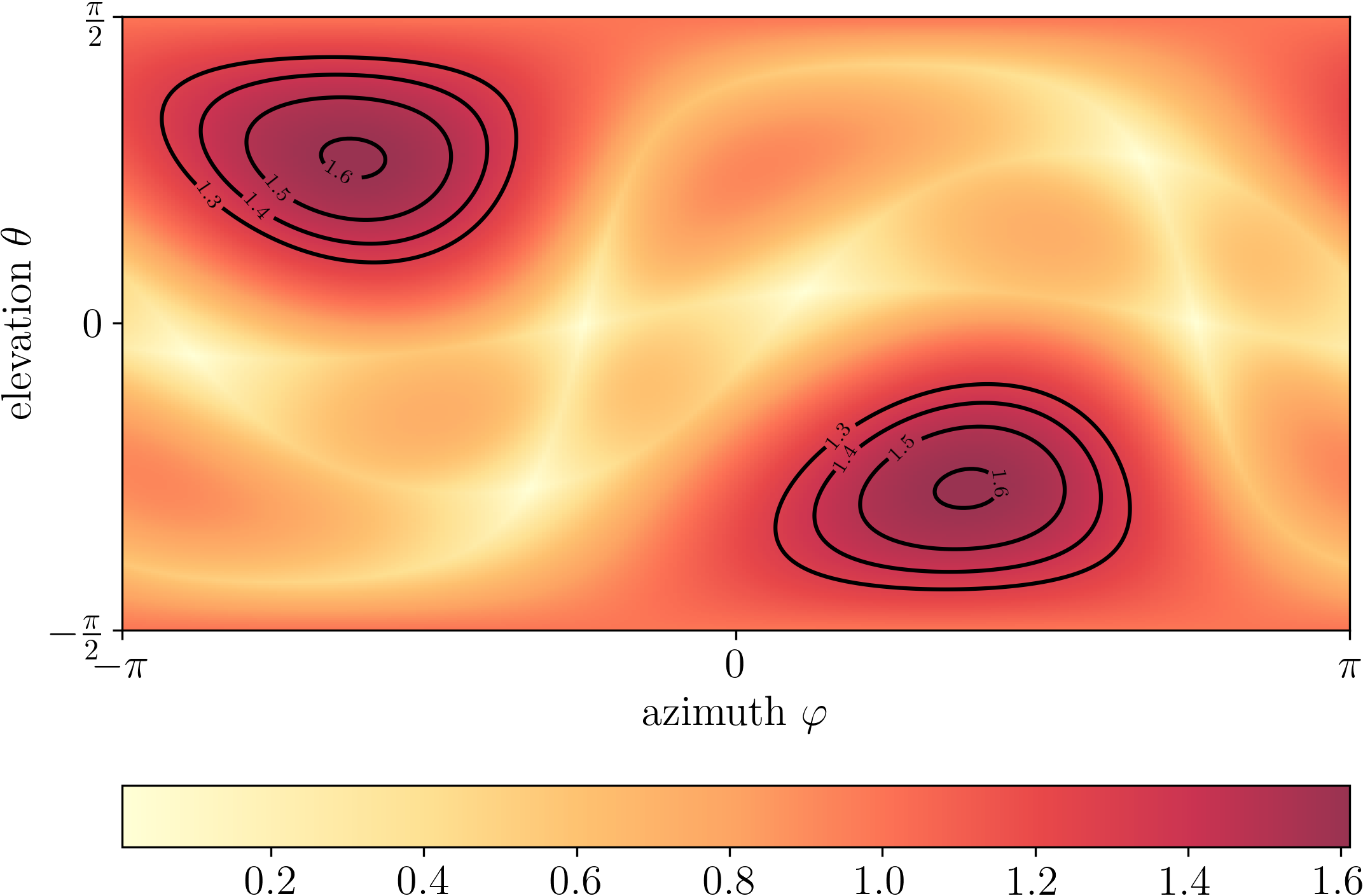}}  & 1.611342375325351 & 1.722680031455154 \rule[-14ex]{0pt}{0pt}\\
  \hline
\raisebox{-0.6\totalheight}{\includegraphics[width=0.42\textwidth]{ 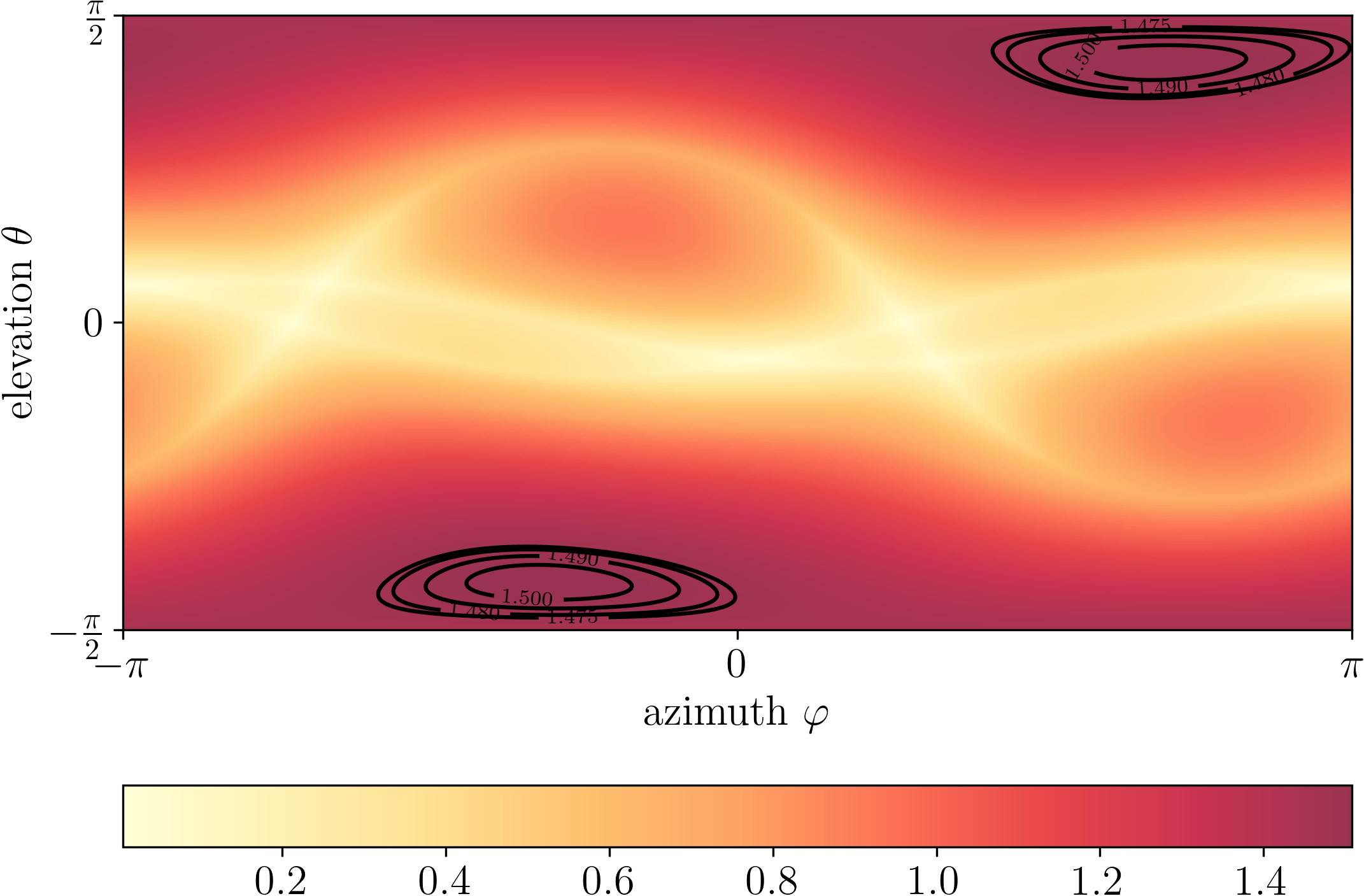}}  & 1.507938701982408 & 1.583409927359882 \rule[-14ex]{0pt}{0pt}\\
  \hline
 \raisebox{-0.6\totalheight}{\includegraphics[width=0.42\textwidth]{ 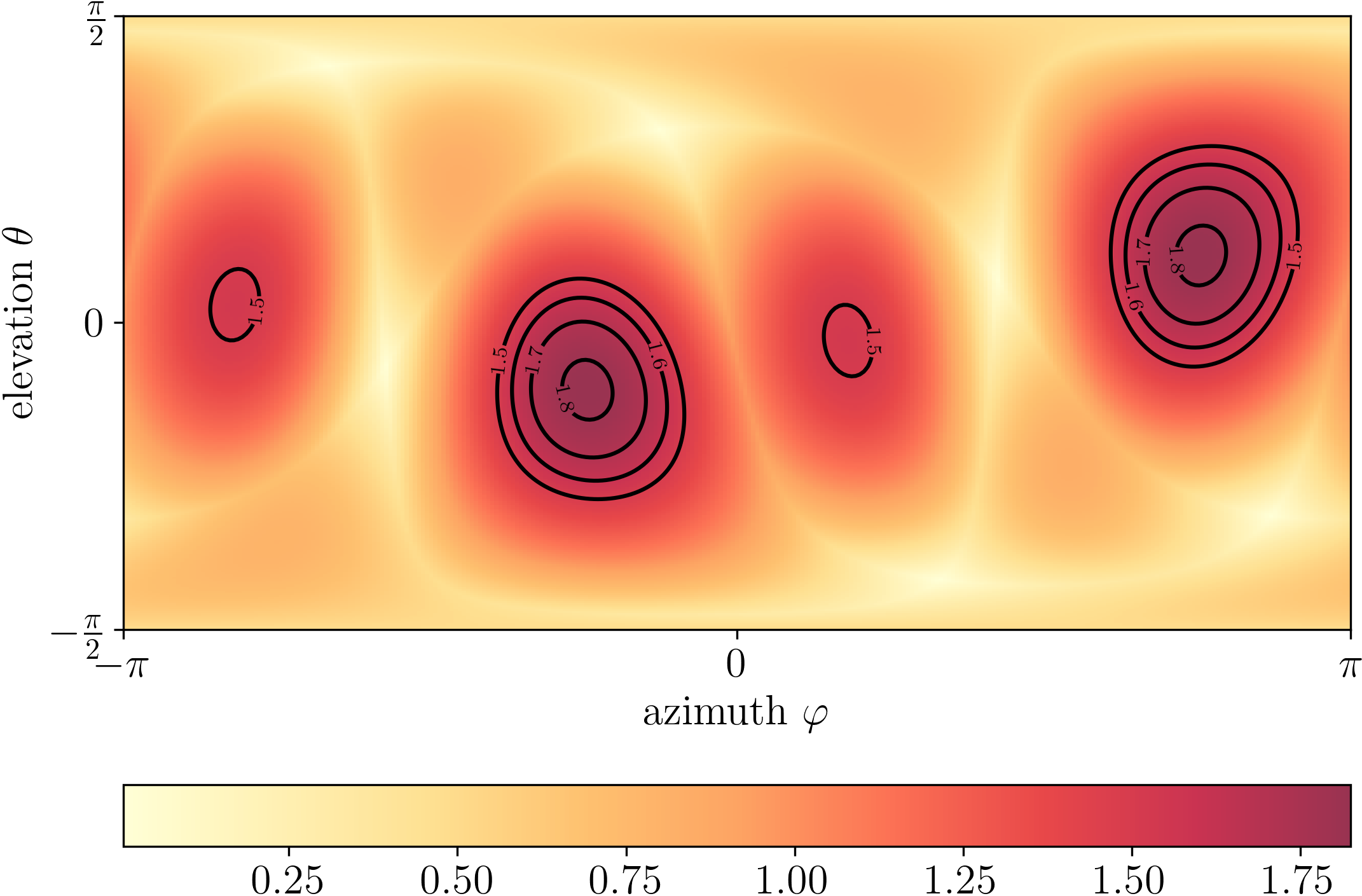}} & 1.824182821725298 & 2.157801577048147 \rule[-14ex]{0pt}{0pt}\\
 \hline
\raisebox{-0.6\totalheight}{\includegraphics[width=0.42\textwidth]{ 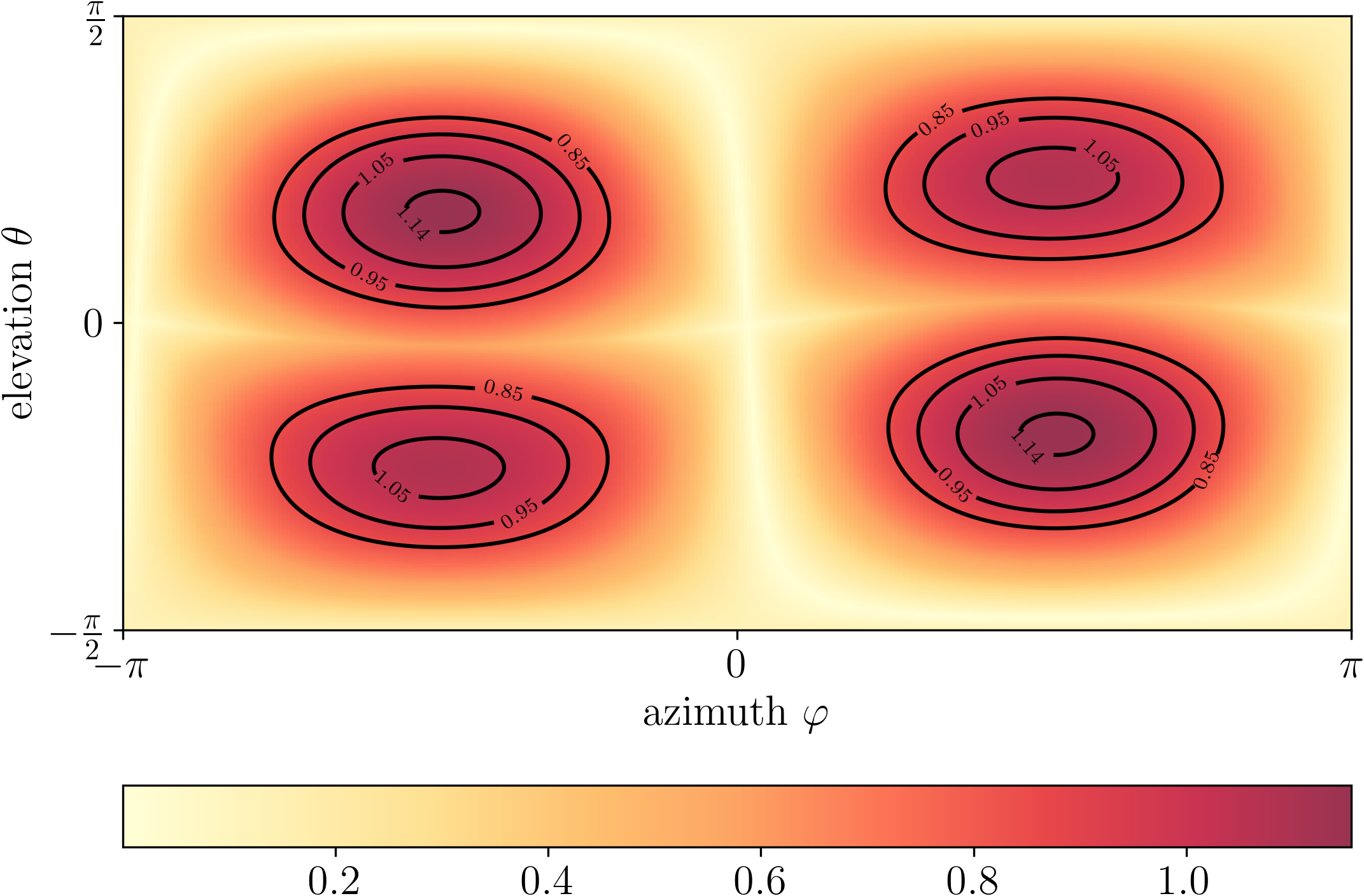}} & 1.154650461995163 & 1.303457527919837\rule[-16.5ex]{0pt}{0pt}\\ 
\hline
\end{tabular}
\caption{Landscapes of $\|\bm{Ty}\|_{q_{2}}-\|\bm{Ty}\|_{q_{1}}$ for $q_{1}=2$, $q_{2}=1$, $\bm{y}\in\mathbb{S}^{2}$ in spherical coordinates for five randomly generated $\bm{T}\in\mathbb{R}^{3\times 3}$ with independent standard Gaussian entries. The middle column reports the numerically estimated global maxima from the respective contour data, i.e., the estimated Hausdorff distance \eqref{HausdorffbetnNormBallsComposedWithLinear}. The last column shows the corresponding bounds \eqref{UpperBoundLinearComposition}.}
\vspace*{-0.1in}
\label{table:1}
\end{table}

\begin{proposition}\label{prop:isometry} (\textbf{Isometry})
Consider a linear mapping given by $\bm{T}:\ell_{q}(\mathbb{R}^{d})\mapsto\ell_{q}\left(\mathbb{R}^{m}\right)$.\\
\vspace*{0.01in}\\
(i) (See e.g., \cite[Remark 3.1]{wang1993structures}) For $q=2$, the mapping $\bm{T}\in\mathbb{R}^{m\times d}$ is an isometry if and only if $\bm{T}^{\top}\bm{T}=\bm{I}_{d}$, i.e., $\bm{T}$ is a column-orthonormal matrix.\\
\vspace*{0.01in}\\
(ii) (\cite[Thm. 3.2]{wang1993structures}) For $q\in[1,\infty)\setminus\{2\}$, the mapping $\bm{T}\in\mathbb{R}^{m\times d}$ is an isometry if and only if there exists a permutation matrix $\bm{P}\in\mathbb{R}^{m\times m}$ such that $\bm{PT}={\rm{diag}}(\bm{r}_1,\bm{r}_2,\hdots,\bm{r}_d)$ and $\|\bm{r}_j\|_q=1$ for all $j\in[d]$. In particular, when $d=m$ and $q\in[1,\infty)\setminus\{2\}$, the mapping $\bm{T}$ is isometry if and only if it is a signed permutation matrix \cite{li1994isometries,288084}, i.e, a permutation matrix whose nonzero entries are either all $+1$ or all $-1$ or some $+1$ and the rest $-1$.
\end{proposition}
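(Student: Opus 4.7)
The plan is to prove each implication of each part separately.

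Part (i) reduces to the standard Hilbert-space argument. The backward direction is immediate: $\bm{T}^\top\bm{T} = \bm{I}_d$ gives $\|\bm{Ty}\|_2^2 = \bm{y}^\top\bm{T}^\top\bm{T}\bm{y} = \|\bm{y}\|_2^2$. For the forward direction, I would introduce the symmetric matrix $\bm{S} := \bm{T}^\top\bm{T} - \bm{I}_d$; the isometry hypothesis forces $\bm{y}^\top\bm{S}\bm{y} = 0$ for every $\bm{y}\in\mathbb{R}^d$. Testing this on $\bm{e}_i$ recovers the diagonal entries of $\bm{S}$, and on $\bm{e}_i + \bm{e}_j$ (equivalently, by the polarization identity) recovers its off-diagonal entries, forcing $\bm{S} = \bm{0}$.

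For part (ii), the sufficiency direction is a direct computation: if $\bm{PT} = {\rm{diag}}(\bm{r}_1,\hdots,\bm{r}_d)$ with $\|\bm{r}_j\|_q = 1$, then $\bm{PTy}$ concatenates the non-overlapping blocks $y_j\bm{r}_j$, so $\|\bm{PTy}\|_q^q = \sum_j|y_j|^q\|\bm{r}_j\|_q^q = \|\bm{y}\|_q^q$; since $\bm{P}$ is a permutation matrix, $\|\bm{Ty}\|_q = \|\bm{PTy}\|_q = \|\bm{y}\|_q$. The technical core is the necessity direction for $q \neq 2$. The key lemma I would establish first is the scalar fact that, for $q \in [1,\infty)\setminus\{2\}$, the quantity $|a+b|^q + |a-b|^q - 2|a|^q - 2|b|^q$ is strictly positive when $q > 2$ and strictly negative when $1\leq q < 2$ whenever $ab \neq 0$, and vanishes iff $ab = 0$. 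A clean route to this lemma is to set $\phi(t):= t^{q/2}$ (strictly convex on $t\geq 0$ when $q>2$, strictly concave when $q<2$), then combine the midpoint inequality applied to $\phi((a+b)^2) + \phi((a-b)^2)$ with the strict super- or sub-additivity of $t \mapsto t^r$ for $r \gtrless 1$, and track the equality cases.

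Summing this scalar statement over coordinates lifts it to the vector identity: $\|\bm{x}+\bm{y}\|_q^q + \|\bm{x}-\bm{y}\|_q^q = 2\|\bm{x}\|_q^q + 2\|\bm{y}\|_q^q$ holds iff $\bm{x}$ and $\bm{y}$ have disjoint coordinate supports. I would then apply this with $\bm{x} = \bm{T}\bm{e}_i$ and $\bm{y} = \bm{T}\bm{e}_j$ for $i\neq j$: because the identity already holds trivially for the support-disjoint pair $\bm{e}_i,\bm{e}_j$, the isometry hypothesis transports equality to the images, forcing the columns $\bm{T}\bm{e}_1,\hdots,\bm{T}\bm{e}_d$ to be pairwise support-disjoint in $\mathbb{R}^m$. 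Each column further has unit $\ell_q$ norm since $\|\bm{T}\bm{e}_j\|_q = \|\bm{e}_j\|_q = 1$, so a suitable row permutation $\bm{P}$ collects the disjoint supports into consecutive blocks, yielding $\bm{PT} = {\rm{diag}}(\bm{r}_1,\hdots,\bm{r}_d)$ with $\|\bm{r}_j\|_q = 1$.

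The main obstacle is establishing the strict-sign scalar identity cleanly: this is the only place the assumption $q \neq 2$ enters, and it is precisely what separates $\ell_q$ geometry from Hilbert geometry (where the same expression is identically zero by the parallelogram law). Once this lemma is in hand, the rest of the necessity argument is bookkeeping. Finally, for the square subcase $d = m$: packing $d$ pairwise support-disjoint vectors into $m=d$ rows leaves each $\bm{r}_j$ with exactly one nonzero entry, and $\|\bm{r}_j\|_q = 1$ forces that entry to be $\pm 1$; hence $\bm{T}$ is a signed permutation matrix, as claimed.
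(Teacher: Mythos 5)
Your proposal is correct, but it is worth noting that the paper offers no proof of this proposition at all: both parts are stated as imported results, with part (i) attributed to \cite[Remark 3.1]{wang1993structures} and part (ii) to \cite[Thm. 3.2]{wang1993structures} and to \cite{li1994isometries,288084} for the square case. What you have written is therefore a self-contained replacement for those citations, and it follows the classical Banach--Lamperti ``disjoint support'' argument in finite dimensions. The key lemma you isolate --- that $|a+b|^{q}+|a-b|^{q}-2|a|^{q}-2|b|^{q}$ has a strict sign determined by whether $q\gtrless 2$ and vanishes exactly when $ab=0$ --- is exactly the right tool, and your derivation of it (midpoint convexity/concavity of $t\mapsto t^{q/2}$ applied to $(a+b)^2$ and $(a-b)^2$, whose average is $a^2+b^2$, chained with strict super-/sub-additivity of $t\mapsto t^{q/2}$) is sound, including the equality tracking: both links in the chain degenerate precisely when $ab=0$. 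Summing coordinatewise, testing on $\bm{T}\bm{e}_i\pm\bm{T}\bm{e}_j$, and using that the identity holds trivially for $\bm{e}_i,\bm{e}_j$ correctly forces pairwise disjoint column supports; unit column norms and the row permutation then give the block form, with any all-zero rows of $\bm{T}$ absorbed harmlessly into some block $\bm{r}_j$. The square case follows by the pigeonhole count you give. What your route buys over the paper's is transparency about where $q\neq 2$ enters (the failure of the parallelogram identity to be an identity); what the paper's citation buys is access to the sharper structural statements in \cite{wang1993structures} without reproving them. No gaps.
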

The following is an immediate consequence of this characterization.
\begin{corollary}\label{corr:isometry}
For $\bm{T}$ as in Proposition \ref{prop:isometry}, the Hausdorff distance $\delta$ in \eqref{HausdorffbetnNormBallsComposedWithLinear} equals \eqref{FinalFormulaHausdorffpNormBalls}.
\end{corollary}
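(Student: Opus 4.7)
The plan is to leverage the isometric structure of $\bm{T}$ to reduce the sup in \eqref{HausdorffbetnNormBallsComposedWithLinear} to the sup in \eqref{HausdorffbetnNormBallsFinal} solved in Sec.~\ref{sec:NormBalls}, so that the closed-form value \eqref{GlobalMaxNormBalls} follows at once.

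First, I would treat case (ii) of Proposition~\ref{prop:isometry}, specialized to $m = d$ so that $\bm{T}$ is a signed permutation matrix. Since such a matrix merely reorders and sign-flips coordinates, it is an isometry for \emph{every} $\ell_r$-norm with $r \in [1, \infty]$; in particular $\|\bm{T}\bm{y}\|_p = \|\bm{y}\|_p$ and $\|\bm{T}\bm{y}\|_q = \|\bm{y}\|_q$. The objective in \eqref{HausdorffbetnNormBallsComposedWithLinear} thus coincides pointwise with the one in \eqref{HausdorffbetnNormBallsFinal}, yielding the value \eqref{GlobalMaxNormBalls} together with the same family of maximizers $\{\bm{v}/\sqrt{d} : \bm{v} \in \{-1, +1\}^{d}\}$.

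Next, I would treat case (i), where $q = 2$ and $\bm{T}^{\top}\bm{T} = \bm{I}_d$. The natural device is the substitution $\bm{z} := \bm{T}\bm{y}$; the $\ell_2$-isometry converts $\|\bm{y}\|_2 = 1$ into $\|\bm{z}\|_2 = 1$, and $\bm{z}$ ranges bijectively over $\mathrm{Col}(\bm{T}) \cap \mathbb{S}^{m-1}$. For the square sub-case $m = d$ (orthogonal $\bm{T}$), $\bm{z}$ covers all of $\mathbb{S}^{d-1}$, and
\[
\sup_{\|\bm{y}\|_2 = 1}\bigl(\|\bm{T}\bm{y}\|_p - \|\bm{T}\bm{y}\|_2\bigr) \;=\; \sup_{\|\bm{z}\|_2 = 1}\bigl(\|\bm{z}\|_p - \|\bm{z}\|_2\bigr),
\]
which is \eqref{HausdorffbetnNormBallsFinal} with $q = 2$, hence evaluates to \eqref{GlobalMaxNormBalls}.

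The main obstacle is the non-square regime $m > d$: in case (i), $\mathrm{Col}(\bm{T})$ becomes a proper $d$-dimensional subspace of $\mathbb{R}^m$, and in case (ii) the block structure of $\bm{T}$ preserves $\ell_q$- but not $\ell_p$-norms, so direct reduction to \eqref{HausdorffbetnNormBallsFinal} fails. I would close this gap by invoking Proposition~\ref{prop:upperbound}: under the $\ell_q$-isometry one has $\|\bm{T}\|_{2 \to q} = \sup_{\|\bm{y}\|_2 = 1}\|\bm{y}\|_q$, so the right-hand side of \eqref{UpperBoundLinearComposition} collapses to the Sec.~\ref{sec:NormBalls} expression \eqref{GlobalMaxNormBalls}, and the task reduces to exhibiting a feasible $\bm{y}^\star$ that realizes this bound; the candidate $\bm{y}^\star = \bm{v}/\sqrt{d}$ with $\bm{v}\in\{-1,+1\}^{d}$, flagged in the discussion preceding the corollary, is the natural one to verify.
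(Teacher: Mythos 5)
Your first two paragraphs are correct and follow essentially the paper's own (one-line) reasoning: the $\ell_q$-isometry lets you pull the problem back to the unconstrained optimization \eqref{HausdorffbetnNormBallsFinal} of Sec.~\ref{sec:NormBalls} --- directly in case (ii), since a signed permutation preserves every $\ell_r$ norm, and via the substitution $\bm{z}=\bm{T}\bm{y}$ in case (i) --- after which \eqref{GlobalMaxNormBalls} follows. (One small caveat: in case (i) the maximizers are $\bm{T}^{\top}\bm{v}/\sqrt{d}$, i.e., the preimages of the sign vectors, not $\bm{v}/\sqrt{d}$ themselves unless $\bm{T}$ happens to be a signed permutation.)

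The third paragraph, however, chases a regime that cannot occur and proposes a repair that would fail if it did. The setting of \eqref{HausdorffbetnNormBallsComposedWithLinear} fixes $m\leq d$ with $\bm{T}$ of full row rank; since any linear isometry is injective, $d\leq \operatorname{rank}(\bm{T})\leq m$, so $m=d$ is forced and only your two square cases arise --- the correct move is to note this exclusion, not to prove the $m>d$ case. Moreover, the claimed ``collapse'' of the right-hand side of \eqref{UpperBoundLinearComposition} to \eqref{GlobalMaxNormBalls} is false in general: the prefactor there is $m^{\frac{1}{p}-\frac{1}{q}}-1$, not $d^{\frac{1}{p}-\frac{1}{q}}-1$, and for $q>2$ an $\ell_q$-isometry gives $\|\bm{T}\|_{2\rightarrow q}=\sup_{\|\bm{y}\|_2=1}\|\bm{y}\|_q=1\neq d^{\frac{1}{q}-\frac{1}{2}}$, so \eqref{UpperBoundLinearComposition} strictly exceeds \eqref{GlobalMaxNormBalls} even for a signed permutation (the corollary still holds there, but via your paragraph-one argument, not via tightness of the bound). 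Worse, for a genuinely tall column-orthonormal $\bm{T}$ the statement itself can fail: take $d=2$, $m=3$, $p=1$, $q=2$, and $\bm{T}$ with column space $\{z_1+z_2+z_3=0\}$; then $\sup_{\|\bm{y}\|_2=1}\left(\|\bm{T}\bm{y}\|_1-\|\bm{T}\bm{y}\|_2\right)=\tfrac{2\sqrt{6}}{3}-1\approx 0.633$, whereas \eqref{GlobalMaxNormBalls} gives $\sqrt{2}-1\approx 0.414$. So no choice of $\bm{y}^{\star}$ could realize your proposed bound in that regime.
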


An instance in which $\|\bm{T}\|_{2\rightarrow q_{1}}$ and hence the bound \eqref{UpperBoundLinearComposition} is efficiently computable, occurs  when $\bm{T}\in\mathbb{R}^{m\times d}$ is elementwise nonnegative and $1\leq q_{1}<2$. In this case, the operator norm $\|\bm{T}\|_{2\rightarrow q_{1}}$ is known \cite[Thm. 3.3]{steinberg2005computation} to be equal to the optimal value of the following convex optimization problem:
\begin{align}
{\rm{OPT}} :=\; &\underset{\bm{X}\succeq\bm{0}}{\max}\:\sqrt{\|{\rm{dg}}\left(\bm{TXT}^{\top}\right)\|_{\frac{q_{1}}{2}}}\nonumber\\
&\text{subject to}\; \|{\rm{dg}}\left(\bm{X}\right)\|_{1} \leq 1,
\label{ConvexProblem}    
\end{align}
where ${\rm{dg}}\left(\cdot\right)$ takes a square matrix as its argument and returns the vector comprising of the diagonal entries of that matrix. To see why problem \eqref{ConvexProblem} is convex, notice that $\bm{X}\succeq\bm{0}$ has unique (principal) square root, so $\bm{TXT}^{\top} = \bm{TX}^{\frac{1}{2}}\left(\bm{TX}^{\frac{1}{2}}\right)^{\top} \succeq 0$ which implies ${\rm{dg}}\left(\bm{TXT}^{\top}\right)$ has nonnegative entries. Consequently, the objective in \eqref{ConvexProblem} is concave for $1\leq q_{1}<2$. The non-empty feasible set 
$\{\bm{X}\in\mathbb{R}^{d\times d} \mid \bm{X}\succeq\bm{0}, \: \|{\rm{dg}}\left(\bm{X}\right)\|_{1}=\sum_{i=1}^{d}X_{ii} \leq 1\}$
is the intersection of the positive semidefinite cone with a linear inequality, hence convex (in fact a spectrahedron).

Then, the right hand side of \eqref{UpperBoundLinearComposition} equals $\left(m^{\frac{1}{q_{2}}-\frac{1}{q_{1}}} - 1\right)\times {\rm{OPT}}$. For example, when 
\begin{align}
q_{1} = 1.5, \quad q_{2}=1, \quad \bm{T} = \begin{bmatrix}
2 & 6 & 0\\
5 & 0 & 1
\end{bmatrix},
\label{DataforCVX}    
\end{align}
a numerical solution of \eqref{ConvexProblem} via {\texttt{cvx}} \cite{cvx} gives ${\rm{OPT}}\approx 7.425702405524379$. As in Table 1, a direct numerical search over the nonconvex landscape (Fig. \ref{fig:cvxContourPlot}) for this example returns the estimated Hausdorff distance $\approx 1.888517738190415$ while using the numerically computed ${\rm{OPT}}$, we find the upper bound \eqref{UpperBoundLinearComposition} $\approx 1.930096365450782$.

\begin{remark}\label{remark:rangeofqforconvexopt}
We clarify here that for \eqref{ConvexProblem} to be used in the upper bound \eqref{UpperBoundLinearComposition}, the range of $q_1$ is $1 < q_1 < 2$. That $\|\bm{T}\|_{2\rightarrow q_{1}}$ equals to \eqref{ConvexProblem} holds also for the case $q_{1}=1$. Indeed, this implies we can compute \eqref{HausdorffbetnLinearMapofDnormBalls} for elementwise nonnegative $\bm{T}$ by computing $\|\bm{T}^{\top}\|_{2\rightarrow 1}$ via convex optimization.
\end{remark}

\begin{figure}%
    \centering
    \includegraphics[width=0.85\linewidth]{ 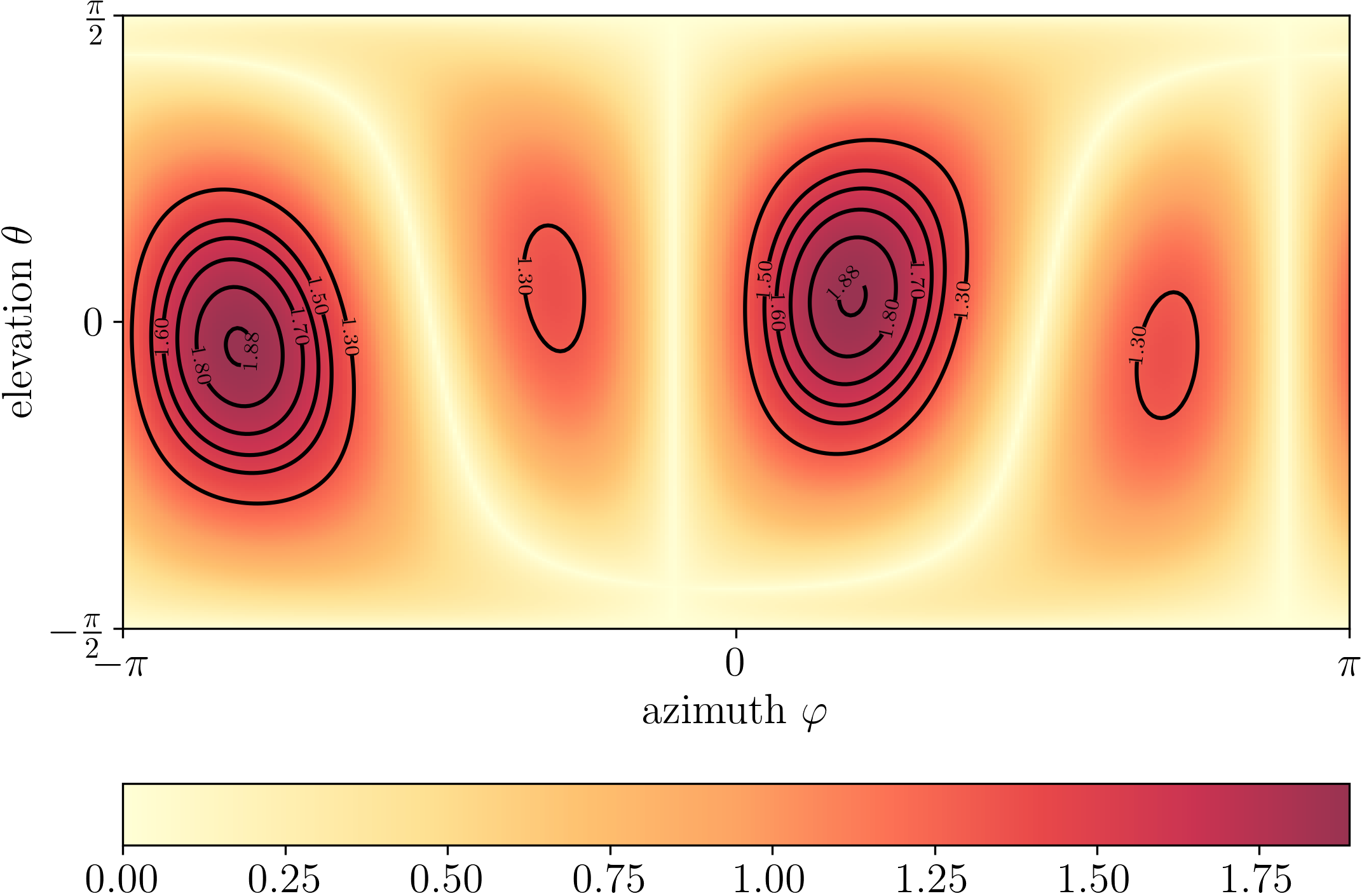}
    \caption{The landscape of the objective in \eqref{HausdorffbetnNormBallsComposedWithLinear} depicted in spherical coordinates for the problem data given in \eqref{DataforCVX}.}%
    \label{fig:cvxContourPlot}%
\end{figure}

\vspace*{0.1in}

\subsection{Estimates for Random {\boldmath $T$}} 
For random linear maps $\bm{T}:\ell_{q}(\mathbb{R}^{d})\mapsto\ell_{q}\left(\mathbb{R}^{m}\right)$, it is possible to bound the expected Hausdorff distance \eqref{HausdorffbetnNormBallsComposedWithLinear}. We collect two such results in the following proposition.

\begin{proposition}\label{prop:RandomLinearMap} (\textbf{Bound for the expected Hausdorff distance})\\
Let $2\leq q_{1} <\infty$.
\vspace*{0.01in}\\
(i) Let $\bm{T}=(\theta_{ij})_{i,j=1}^{m,d}$ have independent (not necessarily identically distributed) mean-zero entries with $\lvert \theta_{ij}\rvert\leq 1$ for all index pair $(i,j)$. Then the Hausdorff distance \eqref{HausdorffbetnNormBallsComposedWithLinear} satisfies
\begin{align}
\mathbb{E}\:\delta\leq \left(m^{\frac{1}{q_{2}}-\frac{1}{q_{1}}}-1\right)C_{q_{1}} \max\{m^{\frac{1}{q_{1}}},\sqrt{d}\}
\label{ExpectedHausdorffAijBetweenMinusOneAndPlusOne}   
\end{align}
where the pre-factor $C_{q_{1}}$ depends only on $q_{1}$.
\vspace*{0.01in}\\
(ii) Let $\bm{T}=(\theta_{ij})_{i,j=1}^{m,d}$ have independent standard Gaussian entries.
Then the Hausdorff distance \eqref{HausdorffbetnNormBallsComposedWithLinear} satisfies
\begin{align}
\mathbb{E}\:\delta\leq C \left(m^{\frac{1}{q_{2}}-\frac{1}{q_{1}}}-1\right) 2^{5/q_1}\left(\log m\right)^{1/q_{1}}\left(\gamma_{2} +\gamma_{q_{1}}\:\mathbb{E}\:\underset{i,j}{\max}\:\lvert\theta_{ij}\rvert\right) + 2^{1/q_{1}}\gamma_{q_{1}}
\label{ExpectedHausdorffAijStdNormal}    
\end{align}
where $C>0$ is a constant, and $\gamma_r := \left(\mathbb{E} \lvert X \rvert ^{r}\right)^{1/r}$, $r\geq 1$, is the $L_{r}$ norm of a standard Gaussian random variable $X$. In particular, $\gamma_r \asymp \sqrt{r}$, i.e., there exist positive constants $c_1,c_2$ such that $c_1\sqrt{r}\leq \gamma_r \leq c_2 \sqrt{r}$ for all $r\geq 1$.
\end{proposition}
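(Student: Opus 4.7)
The plan is to reduce both parts to known bounds on $\mathbb{E}\|\bm{T}\|_{2\rightarrow q}$ from random matrix theory, using Proposition \ref{prop:upperbound} as the deterministic bridge. Since that proposition gives
$$\delta \leq \left(m^{\frac{1}{p}-\frac{1}{q}}-1\right)\|\bm{T}\|_{2\rightarrow q}$$
sample-wise with a non-random prefactor, taking expectations will immediately yield
$$\mathbb{E}\,\delta \leq \left(m^{\frac{1}{p}-\frac{1}{q}}-1\right)\mathbb{E}\|\bm{T}\|_{2\rightarrow q},$$
so it will suffice to bound $\mathbb{E}\|\bm{T}\|_{2\rightarrow q}$ in each setting (with a possible additive correction in part (ii), as noted below).

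For part (i), I would invoke an estimate of the form $\mathbb{E}\|\bm{T}\|_{2 \to q} \leq C_q \max\{m^{1/q},\sqrt{d}\}$ valid for $q \geq 2$ and independent mean-zero entries bounded in magnitude by one. Such a bound is obtainable via symmetrization and a Khintchine-type inequality: intuitively, the $m^{1/q}$ term will govern the regime where $q$ is large (the $\ell_q^m$ norm being essentially the row-wise maximum), while the $\sqrt{d}$ term will come from a Dudley-style entropy integral over the unit sphere $\mathbb{S}^{d-1}$. Substituting this estimate into the reduction above will give \eqref{ExpectedHausdorffAijBetweenMinusOneAndPlusOne}.

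For part (ii), with standard Gaussian entries, $(\|\bm{Ty}\|_q)_{\|\bm{y}\|_2=1}$ is the supremum of a Gaussian process, so I would control it via Chevet's inequality combined with Dudley / generic chaining over the $\ell_q^m$ ball. A refined estimate in this spirit will produce an expected-operator-norm bound proportional to $2^{5/q}(\log m)^{1/q}\bigl(\gamma_2 + \gamma_q\,\mathbb{E}\max_{i,j}|\theta_{ij}|\bigr)$ --- where the $(\log m)^{1/q}$ factor is the Dudley penalty for the $\ell_q^m$ unit ball, and the two additive pieces capture the two chaining scales --- plus a residual $2^{1/q}\gamma_q$ coming from the boundary of the chaining argument that will not be absorbed by the $(m^{1/p-1/q}-1)$ prefactor. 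The asymptotic $\gamma_r \asymp \sqrt{r}$ will follow from a direct Stirling calculation on the Gaussian absolute moments.

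The hardest part will be locating the precise random-matrix inputs and keeping the constants explicit, particularly for part (ii): the additive structure of the final bound indicates that one cannot simply multiply $\mathbb{E}\|\bm{T}\|_{2\to q}$ by $(m^{1/p-1/q}-1)$, so one has to track the residual unscaled term carefully through the chaining argument and confirm that it indeed does not inherit the prefactor.
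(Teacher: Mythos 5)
Your skeleton is exactly the paper's: apply Proposition \ref{prop:upperbound} sample-wise, take expectations (legitimate since the prefactor $m^{1/p-1/q}-1$ is deterministic), and reduce everything to bounding $\mathbb{E}\|\bm{T}\|_{2\rightarrow q}$. Where you differ is that you propose to \emph{re-derive} the two random-matrix inputs (symmetrization plus Khintchine for part (i), Chevet plus chaining for part (ii)), whereas the paper simply cites them: part (i) is \cite[Thm.~1]{bennett1975norms} and part (ii) is a specialization of \cite[Thm.~1.1]{guedon2017expectation}. Your sketches are plausible descriptions of how those theorems are proved, but as written they are not proofs; if you are not going to carry out the chaining argument, you should cite the results, and then your argument coincides with the paper's. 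One substantive point: your instinct about the unscaled additive term $2^{1/q}\gamma_q$ in \eqref{ExpectedHausdorffAijStdNormal} is worth taking seriously, but your proposed resolution (tracking a residual through the chaining so that it escapes the prefactor) is not how the paper gets it and would not obviously work. The paper's own derivation multiplies the full operator-norm bound \eqref{2toqOpNormBoundStdGaussian} by $\left(m^{\frac{1}{p}-\frac{1}{q}}-1\right)$, which yields
\begin{align*}
\mathbb{E}\:\delta \leq \left(m^{\frac{1}{p}-\frac{1}{q}}-1\right)\left[C\, 2^{5/q}(\log m)^{1/q}\left(\gamma_2+\gamma_q\,\mathbb{E}\max_{i,j}|\theta_{ij}|\right) + 2^{1/q}\gamma_q\right],
\end{align*}
i.e., the additive term \emph{should} carry the prefactor; the form stated in \eqref{ExpectedHausdorffAijStdNormal} only dominates this when $m^{1/p-1/q}-1\leq 1$. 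So the honest conclusion of the reduction is the displayed inequality above, and you should state it that way rather than reverse-engineer the printed form.
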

\begin{proof}
(i) Following \cite[Thm. 1]{bennett1975norms}, we bound the expected operator norm as $\mathbb{E}\|\bm{T}\|_{2\rightarrow q_{1}} \leq C_{q_{1}} \max\{m^{1/q_{1}},\sqrt{d}\}$. Combining this with \eqref{UpperBoundLinearComposition}, the result follows.\\
(ii) The expected $2\rightarrow q_{1}$ operator norm bound, in this case, follows from specializing more general bound\footnote{The operator norm bound in \cite[Thm. 1.1]{guedon2017expectation} is more general on two counts. First, the operator norm considered there is $p^{*}\rightarrow q$ where $1 \leq p^{*} \leq 2 \leq q \leq \infty$. Second, the result therein allows nonuniform deterministic scaling of the standard Gaussian entries of $\bm{T}$.} given in \cite[Thm. 1.1]{guedon2017expectation}. Specifically, we get
\begin{align}
\mathbb{E}\|\bm{T}\|_{2\rightarrow q_{1}} \leq C 2^{5/q_{1}}\left(\log m\right)^{1/q_{1}}\left(\gamma_2 +\gamma_{q_{1}}\:\mathbb{E}\:\underset{i,j}{\max}\:\lvert a_{ij}\rvert \right) + 2^{1/q_{1}}\gamma_{q_{1}},
\label{2toqOpNormBoundStdGaussian}\end{align}
where $C,\gamma_2,\gamma_{q_{1}}$ are as in the statement. Combining \eqref{2toqOpNormBoundStdGaussian} with \eqref{UpperBoundLinearComposition}, we obtain \eqref{ExpectedHausdorffAijStdNormal}.
\end{proof}
\section{Integral Version and Application}\label{sec:integral}
We now consider a further generalization of \eqref{HausdorffbetnNormBallsComposedWithLinear} given by
\begin{align}
\delta\left(\mathcal{K}_{1},\mathcal{K}_{2}\right) = \underset{\|\bm{y}\|_{2} = 1}{\sup}\quad \int_{0}^{t}\left(\|\bm{T}(\tau)\bm{y}\|_{q_{2}}- \|\bm{T}(\tau)\bm{y}\|_{q_{1}}\right)\:\differential\tau,  \qquad 1 \leq q_{2} < q_{1} \leq \infty,  
\label{HausdorffIntegral}    
\end{align}
where for each $\tau\in[0,t]$, the matrix $\bm{T}(\tau)\in\mathbb{R}^{m\times d}$, $m\leq d$, is smooth in $\tau$ and has full row rank $m$. 

As before, let $p_{1},p_{2}$ denote the H\"{o}lder conjugates of $q_{1},q_{2}$, respectively. We can interpret \eqref{HausdorffIntegral} as computing the Hausdorff distance between two compact convex sets in $\mathbb{R}^{d}$ obtained by first taking linear transformations of the $m$-dimensional $p_1$ and $p_2$ unit norm balls via $\bm{T}^{\top}(\tau)\in\mathbb{R}^{d\times m}$ for fixed $\tau\in[0,t]$, and then taking respective Minkowski sums for varying $\tau$ and finally passing to the limit. In particular, if we let $\mathcal{P}_{i}:=\{\bm{v}\in\mathbb{R}^{m}\mid\|\bm{v}\|_{p_{i}}\leq 1\}$ for $i\in\{1,2\}$, then \eqref{HausdorffIntegral} computes the Hausdorff distance between the $d$ dimensional compact convex sets
\begin{subequations}
\begin{align}
&\mathcal{K}_1\equiv\int_{0}^{t}\bm{T}^{\top}(\tau)\mathcal{P}_{1}\differential\tau := \underset{\Delta\downarrow 0}{\lim}\sum_{i=0}^{\lfloor t/\Delta \rfloor}\Delta \bm{T}^{\top}(i\Delta)\mathcal{P}_{1},\\
&\mathcal{K}_2\equiv\int_{0}^{t}\bm{T}^{\top}(\tau)\mathcal{P}_{2}\differential\tau := \underset{\Delta\downarrow 0}{\lim}\sum_{i=0}^{\lfloor t/\Delta \rfloor}\Delta \bm{T}^{\top}(i\Delta)\mathcal{P}_{2},
\end{align}
\label{SetValuedIntegrals}
\end{subequations}
i.e., the sets under consideration are set-valued Aumann integrals \cite{aumann1965integrals} and the symbol $\sum$ denotes the Minkowski sum. That the sets in \eqref{SetValuedIntegrals} are convex is a consequence of the Lyapunov convexity theorem \cite{liapounoff1940fonctions,halmos1948range}.

Notice that in this case, \eqref{UpperBoundLinearComposition} directly yields
\begin{align}
\delta\left(\mathcal{K}_{1},\mathcal{K}_{2}\right) \leq \int_{0}^{t}\underset{\|\bm{y}\|_{2} = 1}{\sup}&\left(\|\bm{T}(\tau)\bm{y}\|_{q_{2}}- \|\bm{T}(\tau)\bm{y}\|_{q_{1}}\right)\:\differential\tau \nonumber \\
&\quad \quad \quad\quad \quad\leq \left(m^{\frac{1}{q_{2}}-\frac{1}{q_{1}}}-1\right) \int_{0}^{t}\|\bm{T}(\tau)\|_{2\rightarrow q_{1}}\:\differential\tau.
\label{IntegralBound}    
\end{align}
A different way to deduce \eqref{IntegralBound} is to utilize the definitions \eqref{SetValuedIntegrals}, and then combine the Hausdorff distance property in \cite[Lemma 2.2(ii)]{de1976differentiability} with a limiting argument. This gives
\begin{align}
\delta\left(\int_{0}^{t}\bm{T}^{\top}(\tau)\mathcal{P}_{1}\differential\tau, \int_{0}^{t}\bm{T}^{\top}(\tau)\mathcal{P}_{2}\differential\tau\right) \leq \int_{0}^{t}\delta\left(\bm{T}^{\top}(\tau)\mathcal{P}_{1},\bm{T}^{\top}(\tau)\mathcal{P}_{2}\right)\differential\tau.    
\label{deltaineqIntegral}    
\end{align}
For a fixed $\tau\in[0,t]$, the integrand in the right hand side of \eqref{deltaineqIntegral} is precisely \eqref{HausdorffbetnNormBallsComposedWithLinear}, hence using Proposition \ref{prop:upperbound} we again arrive at \eqref{IntegralBound}.

As a motivating application, consider two controlled linear dynamical agents with identical dynamics given by the ordinary differential equation
\begin{align}
\dot{\bm{x}}^{i}(t) =\bm{A}(t)\bm{x}^{i}(t) + \bm{B}(t)\bm{u}^{i}(t), \quad i\in\{1,2\},
\label{LTVdyn}    
\end{align}
where $\bm{x}^{i}(t)\in\mathbb{R}^{d}$ is the state and $\bm{u}^{i}(t)\in\mathbb{R}^{m}$ is the control input for the $i$th agent at time $t$. Suppose that the system matrices $\bm{A}(t),\bm{B}(t)$ are smooth measurable functions of $t$, and that the initial conditions for the two agents have the same compact convex set valued uncertainty, i.e., $\bm{x}^{i}(t=0)\in$ compact convex$\:\mathcal{X}_0 \subset \mathbb{R}^{d}$. Furthermore, suppose that the input uncertainty sets for the two systems are given by different unit norm balls
\begin{align}
\mathcal{U}^i := \{\bm{u}^{i}(\tau)\in\mathbb{R}^{m}\mid \|\bm{u}^{i}(\tau)\|_{p_{i}}\leq 1\:\text{for all}\:\tau\in[0,t]\}, \quad i\in\{1,2\},
\label{inputnormballs}    
\end{align}
\noindent such that $1 \leq p_1 < p_2 \leq \infty$. Given these set-valued uncertainties, the ``reach sets" $\mathcal{X}_{t}^{i}$, $i\in\{1,2\}$, are defined as the respective set of states each agent may reach at a given time $t>0$. Specifically, for $i\in\{1,2\}$, and $\mathcal{U}^i$ given by \eqref{inputnormballs}, the reach sets are
\begin{align}
\!\mathcal{X}_{t}^{i}&:=\!\!\!\bigcup_{\text{measurable}\;\bm{u}^{i}(\cdot)\in\mathcal{U}^{i}}\!\!\!\big\{\bm{x}^{i}(t)\in\mathbb{R}^{d} \mid \dot{\bm{x}}^{i}(t) =\bm{A}(t)\bm{x}^{i}(t) + \bm{B}(t)\bm{u}^{i}(t), \quad i\in\{1,2\},\nonumber \\ &\quad \quad  \bm{x}^{i}(t=0)\in\text{compact convex}\:\mathcal{X}_0,\quad \bm{u}^{i}(\tau)\in\mathcal{U}^{i}\;\text{for all}\; 0\leq \tau\leq t\big\}.
\label{DefReachSets}    
\end{align}
As such, there exists a vast literature \cite{pecsvaradi1971reachable,witsenhausen1972remark,chutinan1999verification,kurzhanski1997ellipsoidal,varaiya2000reach,le2010reachability,althoff2021set,haddad2023curious,haddad2021anytime} on reach sets and their numerical approximations. In practice, these sets are of interest because their separation or intersection often imply safety or the lack of it. It is natural to quantify the distance between reach sets or their approximations in terms of the Hausdorff distance \cite{guseinov2007approximation,dueri2016consistently,halder2020smallest}, and in our context, this amounts to estimating $\delta\left(\mathcal{X}_{t}^{1},\mathcal{X}_{t}^{2}\right)$. 

Since $1 \leq p_1 < p_2 \leq \infty$, we have the norm ball inclusion $\mathcal{U}^{1} \subset \mathcal{U}^{2}$, and consequently $\mathcal{X}_{t}^{1} \subset \mathcal{X}_{t}^{2}$. We next show that $\delta\left(\mathcal{X}_{t}^{1},\mathcal{X}_{t}^{2}\right)$ is exactly of the form \eqref{HausdorffIntegral}.
\begin{theorem}\label{thm:HausdorffBetnLTVDiffNormBallInput}
(\textbf{Hausdorff distance between linear systems' reach sets with norm ball input uncertainty})
Consider the reach sets \eqref{DefReachSets} with input set valued uncertainty \eqref{inputnormballs}. For $\tau\leq t$, let $\bm{\Phi}(t,\tau)$ be the state transition matrix (see e.g., \cite[Ch. 1.3]{brockett1970finite}) associated with \eqref{LTVdyn}. Denote the H\"{o}lder conjugate of $p_1$ as $q_1$, and that of $p_{2}$ as $q_{2}$, i.e., $1/p_1 + 1/q_1 = 1$ and $1/p_2 + 1/q_2 = 1$. Then $1\leq q_2 < q_1 \leq \infty$, and the Hausdorff distance
\begin{align}
\delta\left(\mathcal{X}_{t}^{1},\mathcal{X}_{t}^{2}\right) = \underset{\|\bm{y}\|_{2} = 1}{\sup}\int_{0}^{t}\left(\|\left(\bm{\Phi}(t,\tau)\bm{B}(\tau)\right)^{\top}\bm{y}\|_{q_{2}}- \|\left(\bm{\Phi}(t,\tau)\bm{B}(\tau)\right)^{\top}\bm{y}\|_{q_{1}}\right)\differential\tau.
\label{HausdorffLTV}    
\end{align}
\end{theorem}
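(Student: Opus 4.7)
The plan is to reduce \eqref{HausdorffLTV} to the integral form \eqref{HausdorffIntegral} by representing each reach set as a Minkowski sum of a common initial-state term and a set-valued integral, then manipulating support functions.

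First, I would invoke the variation-of-parameters formula for \eqref{LTVdyn}: any trajectory with $\bm{x}^{i}(0)\in\mathcal{X}_{0}$ and measurable input $\bm{u}^{i}(\cdot)\in\mathcal{U}^{i}$ satisfies $\bm{x}^{i}(t)=\bm{\Phi}(t,0)\bm{x}^{i}(0)+\int_{0}^{t}\bm{\Phi}(t,\tau)\bm{B}(\tau)\bm{u}^{i}(\tau)\,\differential\tau$. Because for each $i\in\{1,2\}$ the pointwise input constraint $\|\bm{u}^{i}(\tau)\|_{p_{i}}\leq 1$ is a selection from the unit $\ell_{p_{i}}$ ball $\mathcal{B}_{p_{i}}^{m}\subset\mathbb{R}^{m}$ at each $\tau$, and measurable selections are dense enough to reproduce the Aumann integral, the reach set admits the representation
\begin{align*}
\mathcal{X}_{t}^{i}=\bm{\Phi}(t,0)\mathcal{X}_{0}\dotplus\int_{0}^{t}\bm{\Phi}(t,\tau)\bm{B}(\tau)\mathcal{B}_{p_{i}}^{m}\,\differential\tau,
\end{align*}
where the integral is the set-valued Aumann integral as in \eqref{SetValuedIntegrals}. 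Convexity and compactness of $\mathcal{X}_{t}^{i}$ follow from the Lyapunov convexity theorem already cited in the paper.

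Second, I would pass to support functions via \eqref{HausdorffSptFn}. Support functions are additive under Minkowski sums and commute with Aumann integrals, and by \eqref{SptFnLinearMap} they pull linear maps through their transposes. Hence for $\bm{y}\in\mathbb{S}^{d-1}$,
\begin{align*}
h_{\mathcal{X}_{t}^{i}}(\bm{y})=h_{\mathcal{X}_{0}}\!\bigl(\bm{\Phi}^{\top}(t,0)\bm{y}\bigr)+\int_{0}^{t}h_{\mathcal{B}_{p_{i}}^{m}}\!\bigl((\bm{\Phi}(t,\tau)\bm{B}(\tau))^{\top}\bm{y}\bigr)\,\differential\tau.
\end{align*}
The initial-state contribution is independent of $i$ and therefore cancels when forming the difference $h_{\mathcal{X}_{t}^{2}}(\bm{y})-h_{\mathcal{X}_{t}^{1}}(\bm{y})$. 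Invoking the duality between $\ell_{p_{i}}$ and its H\"older conjugate, the support function of $\mathcal{B}_{p_{1}}^{m}$ is $\|\cdot\|_{q}$ and that of $\mathcal{B}_{p_{2}}^{m}$ is $\|\cdot\|_{p}$, with $1\leq p<q<\infty$ from the assumption $1<p_{1}<p_{2}\leq\infty$.

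Third, since $\mathcal{U}^{1}\subset\mathcal{U}^{2}$ implies $\mathcal{X}_{t}^{1}\subset\mathcal{X}_{t}^{2}$, the inclusion $h_{\mathcal{X}_{t}^{1}}\leq h_{\mathcal{X}_{t}^{2}}$ holds on $\mathbb{S}^{d-1}$, so the absolute value in \eqref{HausdorffSptFn} may be dropped per the footnote there. Combining this with the previous step yields
\begin{align*}
\delta(\mathcal{X}_{t}^{1},\mathcal{X}_{t}^{2})=\sup_{\|\bm{y}\|_{2}=1}\int_{0}^{t}\Bigl(\|(\bm{\Phi}(t,\tau)\bm{B}(\tau))^{\top}\bm{y}\|_{p}-\|(\bm{\Phi}(t,\tau)\bm{B}(\tau))^{\top}\bm{y}\|_{q}\Bigr)\,\differential\tau,
\end{align*}
which is \eqref{HausdorffLTV}. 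The only non-routine step is justifying the two interchanges underlying the second paragraph, namely that (a) the reach set equals the Minkowski/Aumann decomposition above (which uses measurability of $\bm{A}(\cdot),\bm{B}(\cdot)$ and the fact that any pointwise selection from $\mathcal{B}_{p_{i}}^{m}$ lifts to a measurable input), and (b) the support function commutes with the Aumann integral; both are classical but deserve an explicit citation. No symmetry or landscape argument is needed since the containment $\mathcal{X}_{t}^{1}\subset\mathcal{X}_{t}^{2}$ does the work of sign control.
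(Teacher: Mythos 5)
Your proposal is correct and follows essentially the same route as the paper's proof: the variation-of-parameters/Minkowski--Aumann decomposition of $\mathcal{X}_t^i$, distributivity of the support function over the Minkowski sum and the set-valued integral, duality of the $\ell_{p_i}$ balls yielding the conjugate norms, and the inclusion $\mathcal{X}_t^1\subset\mathcal{X}_t^2$ to drop the absolute value in \eqref{HausdorffSptFn}. The two interchanges you flag as needing justification are exactly the steps the paper delegates to its citations, so nothing is missing.
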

\begin{proof}
We have
\begin{align}
\mathcal{X}^{i}_{t} = \bm{\Phi}(t,0)\mathcal{X}_{0} \:\dotplus\int_{0}^{t}\bm{\Phi}(t,\tau)\bm{B}(\tau)\mathcal{U}^{i}\:\differential\tau, \quad i\in\{1,2\},  
\label{LTVreachsetAsIntegral}    
\end{align}
where $\dotplus$ denotes the Minkowski sum and the second summand in \eqref{LTVreachsetAsIntegral} is a set-valued Aumann integral.

Since the support function is distributive over the Minkowski sum, following \cite[Prop. 1]{haddad2020convex} and \eqref{DefSptFn}, from \eqref{LTVreachsetAsIntegral} we find that
\begin{align}
h_{i}\left(\bm{y}\right):=h_{\mathcal{X}^{i}_{t}}\left(\bm{y}\right) = \left(\underset{\bm{x}_{0}\in\mathcal{X}_{0}}{\sup}\!\langle\bm{y},\bm{\Phi}(t,0)\bm{x}_0\rangle\right)\! +\!\! \int_{0}^{t}\!\!\underset{\bm{u}^{i}(\tau)\in\mathcal{U}^{i}}{\sup}\!\langle\bm{y},\bm{\Phi}(t,\tau)\bm{B}(\tau)\bm{u}^{i}(\tau)\rangle\:\differential\tau,
\label{hi}
\end{align}
wherein $i\in\{1,2\}$ and the sets $\mathcal{U}^{i}$ are given by \eqref{inputnormballs}. Next, we follow the same arguments as in \cite[Thm. 1]{haddad2022certifying} to simplify \eqref{hi} as
\begin{align}
h_{i}\left(\bm{y}\right) = \left(\underset{\bm{x}_{0}\in\mathcal{X}_{0}}{\sup}\langle\bm{y},\bm{\Phi}(t,0)\bm{x}_0\rangle\right) + \int_{0}^{t}\|\left(\bm{\Phi}(t,\tau)\bm{B}(\tau)\right)^{\top}\bm{y}\|_{q_{i}}\:\differential\tau,    
\label{Anotherhi}    
\end{align}
where $q_{i}$ is the H\"{o}lder conjugate of $p_{i}$. 
Then \eqref{HausdorffSptFn} together with \eqref{Anotherhi} yield \eqref{HausdorffLTV}. 
\end{proof}

\begin{corollary}\label{corr:UpperBoundLTVHausdorff}
Using the same notations of Theorem \ref{thm:HausdorffBetnLTVDiffNormBallInput}, we have
\begin{align}
\delta\left(\mathcal{X}_{t}^{1},\mathcal{X}_{t}^{2}\right) \leq \left(m^{\frac{1}{q_{2}}-\frac{1}{q_{1}}}-1\right)\int_{0}^{t}\|\bm{\Phi}(t,\tau)\bm{B}(\tau)\|_{p_1\rightarrow 2}\:\differential\tau.
\label{LTVupperbndHausdorff}    
\end{align}
\end{corollary}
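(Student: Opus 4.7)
The plan is to chain Theorem \ref{thm:HausdorffBetnLTVDiffNormBallInput} with Proposition \ref{prop:upperbound} and clean up the resulting operator norm via transposition/duality. First, I would start from the exact expression \eqref{HausdorffLTV} for $\delta(\mathcal{X}_t^1,\mathcal{X}_t^2)$ provided by Theorem \ref{thm:HausdorffBetnLTVDiffNormBallInput}, and push the outer supremum inside the time integral (the standard $\sup\int \leq \int\sup$ inequality), arriving at the intermediate form
\begin{align*}
\delta(\mathcal{X}_t^1,\mathcal{X}_t^2) \leq \int_0^t \underset{\|\bm{y}\|_2=1}{\sup}\Bigl(\|(\bm{\Phi}(t,\tau)\bm{B}(\tau))^\top \bm{y}\|_p - \|(\bm{\Phi}(t,\tau)\bm{B}(\tau))^\top \bm{y}\|_q\Bigr)\differential\tau,
\end{align*}
exactly as foreshadowed by \eqref{IntegralBound} in the text.

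Second, I would apply Proposition \ref{prop:upperbound} pointwise in $\tau$ to the matrix $\bm{T}(\tau):=(\bm{\Phi}(t,\tau)\bm{B}(\tau))^\top \in \mathbb{R}^{m\times d}$, which yields the integrand bound $(m^{1/p - 1/q}-1)\|\bm{T}(\tau)\|_{2\to q}$. Substituting $p=p_2^{\prime}$ and $q=p_1^{\prime}$ (the H\"older conjugates as fixed in the statement of Theorem \ref{thm:HausdorffBetnLTVDiffNormBallInput}) produces the prefactor $m^{1/p_2^{\prime}-1/p_1^{\prime}}-1$, matching \eqref{LTVupperbndHausdorff}.

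The only remaining task is a transposition/duality bookkeeping step: I would invoke the standard identity $\|\bm{M}^\top\|_{a\to b} = \|\bm{M}\|_{b^\ast \to a^\ast}$, where $a^\ast,b^\ast$ are H\"older conjugates of $a,b$. Applied to $\bm{M}=\bm{\Phi}(t,\tau)\bm{B}(\tau)$ with $a=2$ and $b=q=p_1^{\prime}$, this gives
\begin{align*}
\|\bm{T}(\tau)\|_{2\to q} = \|(\bm{\Phi}(t,\tau)\bm{B}(\tau))^\top\|_{2\to p_1^{\prime}} = \|\bm{\Phi}(t,\tau)\bm{B}(\tau)\|_{p_1 \to 2},
\end{align*}
since $(p_1^{\prime})^{\ast}=p_1$ and $2^{\ast}=2$. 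Inserting this back under the integral sign yields \eqref{LTVupperbndHausdorff}.

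I do not anticipate a genuine obstacle here; the corollary is essentially a packaging result that composes Theorem \ref{thm:HausdorffBetnLTVDiffNormBallInput}, Proposition \ref{prop:upperbound}, and the transpose-duality identity for induced operator norms. The only point requiring care is consistent tracking of which space the matrix acts on (so that the $m$ in $m^{1/p_2^{\prime}-1/p_1^{\prime}}$ really is the input dimension $m$ of the original control, and not the state dimension $d$), which is guaranteed by the fact that $\bm{T}(\tau)$ in Proposition \ref{prop:upperbound} has exactly $m$ rows after the transpose.
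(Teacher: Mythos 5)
Your proposal is correct and matches the paper's own proof essentially step for step: the paper likewise invokes the bound \eqref{IntegralBound} (which is exactly your $\sup\int\leq\int\sup$ step followed by a pointwise application of Proposition \ref{prop:upperbound} to $\left(\bm{\Phi}(t,\tau)\bm{B}(\tau)\right)^{\top}\in\mathbb{R}^{m\times d}$) and then applies the adjoint identity $\|\left(\cdot\right)^{\top}\|_{\alpha\rightarrow\beta}=\|\cdot\|_{\beta^{*}\rightarrow\alpha^{*}}$ with $\alpha=2$, $\beta=q=p_{1}^{\prime}$ to convert $\|\left(\bm{\Phi}(t,\tau)\bm{B}(\tau)\right)^{\top}\|_{2\rightarrow q}$ into $\|\bm{\Phi}(t,\tau)\bm{B}(\tau)\|_{p_{1}\rightarrow 2}$. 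Your remark about tracking the row dimension $m$ after transposition is the right point of care and is consistent with the paper.
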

\begin{proof}
From \eqref{IntegralBound}, we obtain the estimate
\begin{align}
\delta\left(\mathcal{X}_{t}^{1},\mathcal{X}_{t}^{2}\right) \leq \left(m^{\frac{1}{q_{2}}-\frac{1}{q_{1}}}-1\right)\int_{0}^{t}\|\left(\bm{\Phi}(t,\tau)\bm{B}(\tau)\right)^{\top}\|_{2\rightarrow q_{1}}\:\differential\tau.
\label{estimate}
\end{align}
Recall that the norm of a linear operator is related to the norm of its adjoint via
$$\|\left(\cdot\right)^{\top}\|_{\alpha\rightarrow\beta} = \|\cdot\|_{\beta^{*}\rightarrow\alpha^{*}},$$
where $\alpha^{*},\beta^{*}$ are the H\"{o}lder conjugates of $\alpha,\beta$, respectively. Using this fact in \eqref{estimate} completes the proof.
\end{proof}
\begin{remark}\label{remark:LTI}
In the special case of a linear time invariant dynamics, the matrices $\bm{A},\bm{B}$ in \eqref{LTVdyn} are constants and $\bm{\Phi}(t,\tau)=\exp((t-\tau)\bm{A})$. In that case, Theorem \ref{thm:HausdorffBetnLTVDiffNormBallInput} and Corollary \ref{corr:UpperBoundLTVHausdorff} apply with these additional simplifications. 
\end{remark}
\begin{remark}\label{remark:Longtime}
As $t$ increases, we expect the bound \eqref{IntegralBound} to become more conservative. Likewise, the gap between \eqref{HausdorffLTV} and \eqref{LTVupperbndHausdorff} is expected to increase with $t$.
\end{remark}

\vspace*{0.1in}

\noindent\textbf{Example.} Consider the linearized equation of motion of a satellite \cite[p. 14-15]{brockett1970finite} of the form \eqref{LTVdyn} with four states, two control inputs, and constant system matrices
\begin{align}
\bm{A}(t)\equiv \begin{bmatrix}
0 & 1 & 0 & 0\\
3\omega^2 & 0 & 0 & 2\omega\\
0 & 0 & 0 & 1\\
0 & -2\omega & 0 & 0
\end{bmatrix}, \quad \bm{B}(t)\equiv \begin{bmatrix}
0 & 0\\
1 & 0\\
0 & 0\\
0 & 1
\end{bmatrix},
\label{LinEx}
\end{align}
for some fixed parameter $\omega$. The input components denote the radial and tangential thrusts, respectively. We consider two cases: the inputs have set-valued uncertainty of the form \eqref{inputnormballs} with $p_1 = 2$ (unit Euclidean norm-bounded thrust) and with $p_2 = \infty$ (unit box-valued thrust). We have \cite[p. 41]{brockett1970finite}
$$\bm{\Phi}(t,\tau)\bm{B} = \begin{bmatrix}
\dfrac{\sin(\omega(t-\tau))}{\omega} & \dfrac{2(1-\cos(\omega(t-\tau)))}{\omega}\\
& \\
\cos(\omega(t-\tau)) & 2\sin(\omega(t-\tau))\\
& \\
-\dfrac{2(1-\cos(\omega(t-\tau)))}{\omega} & \dfrac{-3\omega(t-\tau) + 4 \sin(\omega(t-\tau))}{\omega}\\
& \\
-2\sin(\omega(t-\tau)) & -3 + 4\sin(\omega(t-\tau))
\end{bmatrix},$$
 for $0\leq \tau < t$, and the integrand in the right hand side of \eqref{LTVupperbndHausdorff} equals to the maximum singular value of the above matrix. For $\omega = 3$ and $t\in[0,2]$, Fig. \ref{fig:satellite} shows the time evolution of the numerically estimated Hausdorff distance \eqref{HausdorffLTV} and the upper bound \eqref{LTVupperbndHausdorff} between the reach sets given by \eqref{DefReachSets} with the same compact convex initial set $\mathcal{X}_0\subset\mathbb{R}^{4}$, i.e., between $\mathcal{X}_t^{1}$ and $\mathcal{X}_t^{2}$ resulting from the unit $p_1=2$ and $p_2=\infty$ norm ball input sets, respectively.

\begin{figure}%
    \centering
    \includegraphics[width=0.85\linewidth]{ 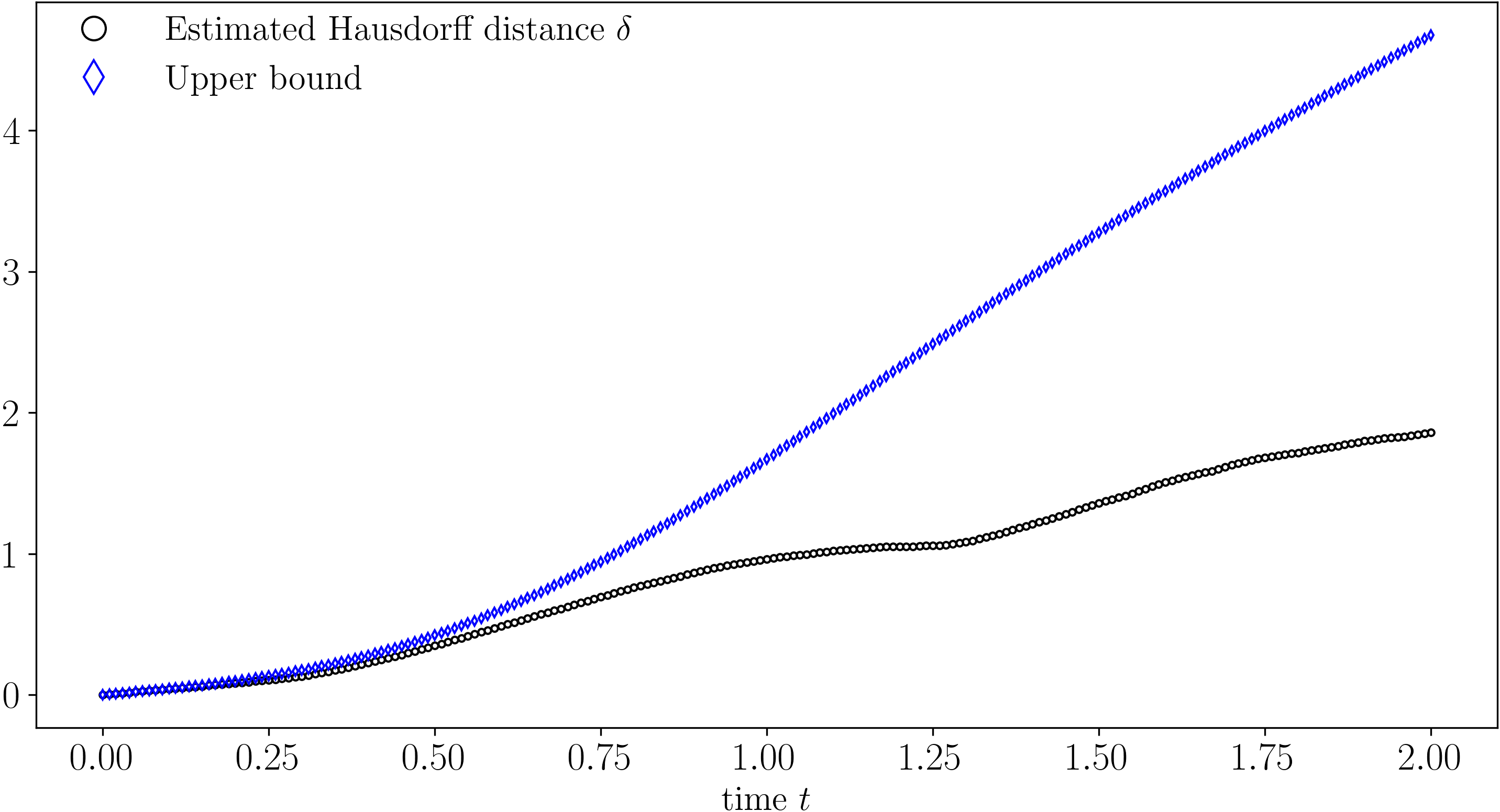}
    \caption{The numerically estimated Hausdorff distance \eqref{HausdorffLTV} and the upper bound \eqref{LTVupperbndHausdorff} for the four state, two input linear system given in \eqref{LinEx}.}%
    \label{fig:satellite}%
\end{figure} 

\section{Conclusions}\label{sec:conclusion}
In this work, we studied the Hausdorff distance between two different norm balls in an Euclidean space and derived closed-form formula for the same. In $d$ dimensions, we provide results for the $\ell_p$ norm balls parameterized by $p$ where $1\leq p \leq \infty$, as well as for the polyhedral $D$-norm balls parameterized by $k$ where $1\leq k \leq d$. We then investigated a more general setting: the Hausdorff distance between two convex sets obtained by transforming two different $\ell_p$ norm balls via a given linear map. In this setting, while we do not know a general closed-form formula for an arbitrary linear map, we provide upper bounds for the Hausdorff distance or its expected value depending on whether the linear map is arbitrary or random. Our results make connections with the literature on hypercontractive operator norms, and on the norms of random linear maps. We then focus on a further generalization: the Hausdorff distance between two set-valued integrals obtained by applying a parametric family of linear maps to the unit $\ell_p$ norm balls, and then
taking the Minkowski sums of the resulting sets in a limiting sense. As an illustrative application, we show that the problem of computing the Hausdorff distance between the reach sets of a linear time-varying dynamical system with different unit $\ell_p$ norm ball-valued input uncertainties, leads to this set-valued integral setting.

We envisage several directions of future work. It is natural to further explore the qualitative properties of the nonconvex landscape \eqref{HausdorffbetnNormBallsComposedWithLinear}, and to design efficient algorithms in computing the Hausdorff distance for the same. It should also be of interest to pursue a systems-control theoretic interpretation of \eqref{HausdorffLTV} and \eqref{LTVupperbndHausdorff} in terms of functionals of the associated controllability Gramian.

\backmatter

\bmhead{Supplementary information}
`Not applicable'.

\bmhead{Availability of data and materials}
The datasets that support our results in this study are available from the authors on request.
\bibliography{references.bib}


\begin{thebibliography}{60}
\ifx \bisbn   \undefined \def \bisbn  #1{ISBN #1}\fi
\ifx \binits  \undefined \def \binits#1{#1}\fi
\ifx \bauthor  \undefined \def \bauthor#1{#1}\fi
\ifx \batitle  \undefined \def \batitle#1{#1}\fi
\ifx \bjtitle  \undefined \def \bjtitle#1{#1}\fi
\ifx \bvolume  \undefined \def \bvolume#1{\textbf{#1}}\fi
\ifx \byear  \undefined \def \byear#1{#1}\fi
\ifx \bissue  \undefined \def \bissue#1{#1}\fi
\ifx \bfpage  \undefined \def \bfpage#1{#1}\fi
\ifx \blpage  \undefined \def \blpage #1{#1}\fi
\ifx \burl  \undefined \def \burl#1{\textsf{#1}}\fi
\ifx \doiurl  \undefined \def \doiurl#1{\url{https://doi.org/#1}}\fi
\ifx \betal  \undefined \def \betal{\textit{et al.}}\fi
\ifx \binstitute  \undefined \def \binstitute#1{#1}\fi
\ifx \binstitutionaled  \undefined \def \binstitutionaled#1{#1}\fi
\ifx \bctitle  \undefined \def \bctitle#1{#1}\fi
\ifx \beditor  \undefined \def \beditor#1{#1}\fi
\ifx \bpublisher  \undefined \def \bpublisher#1{#1}\fi
\ifx \bbtitle  \undefined \def \bbtitle#1{#1}\fi
\ifx \bedition  \undefined \def \bedition#1{#1}\fi
\ifx \bseriesno  \undefined \def \bseriesno#1{#1}\fi
\ifx \blocation  \undefined \def \blocation#1{#1}\fi
\ifx \bsertitle  \undefined \def \bsertitle#1{#1}\fi
\ifx \bsnm \undefined \def \bsnm#1{#1}\fi
\ifx \bsuffix \undefined \def \bsuffix#1{#1}\fi
\ifx \bparticle \undefined \def \bparticle#1{#1}\fi
\ifx \barticle \undefined \def \barticle#1{#1}\fi
\bibcommenthead
\ifx \bconfdate \undefined \def \bconfdate #1{#1}\fi
\ifx \botherref \undefined \def \botherref #1{#1}\fi
\ifx \url \undefined \def \url#1{\textsf{#1}}\fi
\ifx \bchapter \undefined \def \bchapter#1{#1}\fi
\ifx \bbook \undefined \def \bbook#1{#1}\fi
\ifx \bcomment \undefined \def \bcomment#1{#1}\fi
\ifx \oauthor \undefined \def \oauthor#1{#1}\fi
\ifx \citeauthoryear \undefined \def \citeauthoryear#1{#1}\fi
\ifx \endbibitem  \undefined \def \endbibitem {}\fi
\ifx \bconflocation  \undefined \def \bconflocation#1{#1}\fi
\ifx \arxivurl  \undefined \def \arxivurl#1{\textsf{#1}}\fi
\csname PreBibitemsHook\endcsname

\bibitem{schneider2014convex}
\begin{bbook}
\bauthor{\bsnm{Schneider}, \binits{R.}}:
\bbtitle{Convex Bodies: the Brunn--Minkowski Theory (ser. Encyclopedia of
  Mathematics and Its Applications)}
vol. \bseriesno{151}.
\bpublisher{Cambridge university press},
\blocation{{}}
(\byear{2014})
\end{bbook}
\endbibitem

\bibitem{hausdorff1914grundzuge}
\begin{bbook}
\bauthor{\bsnm{Hausdorff}, \binits{F.}}:
\bbtitle{Grundz{\"u}ge der Mengenlehre}
vol. \bseriesno{7}.
\bpublisher{von Veit},
\blocation{{}}
(\byear{1914})
\end{bbook}
\endbibitem

\bibitem{hildenbrand2015core}
\begin{bchapter}
\bauthor{\bsnm{Hildenbrand}, \binits{W.}}:
\bctitle{Core and equilibria of a large economy.(psme-5)}.
In: \bbtitle{Core and Equilibria of a Large Economy.(PSME-5)}.
\bpublisher{Princeton university press},
\blocation{{}}
(\byear{2015})
\end{bchapter}
\endbibitem

\bibitem{stoyan2013stochastic}
\begin{bbook}
\bauthor{\bsnm{Stoyan}, \binits{D.}},
\bauthor{\bsnm{Kendall}, \binits{W.S.}},
\bauthor{\bsnm{Chiu}, \binits{S.N.}},
\bauthor{\bsnm{Mecke}, \binits{J.}}:
\bbtitle{Stochastic Geometry and Its Applications}.
\bpublisher{John Wiley \& Sons},
\blocation{{}}
(\byear{2013})
\end{bbook}
\endbibitem

\bibitem{serra1998hausdorff}
\begin{barticle}
\bauthor{\bsnm{Serra}, \binits{J.}}:
\batitle{Hausdorff distances and interpolations}.
\bjtitle{Computational Imaging and Vision}
\bvolume{12},
\bfpage{107}--\blpage{114}
(\byear{1998})
\end{barticle}
\endbibitem

\bibitem{huttenlocher1993comparing}
\begin{barticle}
\bauthor{\bsnm{Huttenlocher}, \binits{D.P.}},
\bauthor{\bsnm{Klanderman}, \binits{G.A.}},
\bauthor{\bsnm{Rucklidge}, \binits{W.J.}}:
\batitle{Comparing images using the {H}ausdorff distance}.
\bjtitle{IEEE Transactions on pattern analysis and machine intelligence}
\bvolume{15}(\bissue{9}),
\bfpage{850}--\blpage{863}
(\byear{1993})
\end{barticle}
\endbibitem

\bibitem{jesorsky2001robust}
\begin{bchapter}
\bauthor{\bsnm{Jesorsky}, \binits{O.}},
\bauthor{\bsnm{Kirchberg}, \binits{K.J.}},
\bauthor{\bsnm{Frischholz}, \binits{R.W.}}:
\bctitle{Robust face detection using the {H}ausdorff distance}.
In: \bbtitle{International Conference on Audio-and Video-based Biometric Person
  Authentication},
pp. \bfpage{90}--\blpage{95}
(\byear{2001}).
\bcomment{Springer}
\end{bchapter}
\endbibitem

\bibitem{de1976differentiability}
\begin{barticle}
\bauthor{\bsnm{De~Blasi}, \binits{F.}}:
\batitle{On the differentiability of multifunctions}.
\bjtitle{Pacific Journal of Mathematics}
\bvolume{66}(\bissue{1}),
\bfpage{67}--\blpage{81}
(\byear{1976})
\end{barticle}
\endbibitem

\bibitem{serry2021overapproximating}
\begin{barticle}
\bauthor{\bsnm{Serry}, \binits{M.}},
\bauthor{\bsnm{Reissig}, \binits{G.}}:
\batitle{Overapproximating reachable tubes of linear time-varying systems}.
\bjtitle{IEEE Transactions on Automatic Control}
\bvolume{67}(\bissue{1}),
\bfpage{443}--\blpage{450}
(\byear{2021})
\end{barticle}
\endbibitem

\bibitem{hiriart2013convex}
\begin{bbook}
\bauthor{\bsnm{Hiriart-Urruty}, \binits{J.-B.}},
\bauthor{\bsnm{Lemar{\'e}chal}, \binits{C.}}:
\bbtitle{Convex Analysis and Minimization Algorithms I: Fundamentals}
vol. \bseriesno{305}.
\bpublisher{Springer},
\blocation{{}}
(\byear{2013})
\end{bbook}
\endbibitem

\bibitem{shisha1967differences}
\begin{barticle}
\bauthor{\bsnm{Shisha}, \binits{O.}},
\bauthor{\bsnm{Mond}, \binits{B.}}:
\batitle{Differences of means}.
\bjtitle{Bulletin of the American Mathematical Society}
\bvolume{73}(\bissue{3}),
\bfpage{328}--\blpage{333}
(\byear{1967})
\end{barticle}
\endbibitem

\bibitem{pecsvaradi1971reachable}
\begin{barticle}
\bauthor{\bsnm{Pecsvaradi}, \binits{T.}},
\bauthor{\bsnm{Narendra}, \binits{K.S.}}:
\batitle{Reachable sets for linear dynamical systems}.
\bjtitle{Information and control}
\bvolume{19}(\bissue{4}),
\bfpage{319}--\blpage{344}
(\byear{1971})
\end{barticle}
\endbibitem

\bibitem{witsenhausen1972remark}
\begin{barticle}
\bauthor{\bsnm{Witsenhausen}, \binits{H.}}:
\batitle{A remark on reachable sets of linear systems}.
\bjtitle{IEEE Transactions on Automatic Control}
\bvolume{17}(\bissue{4}),
\bfpage{547}--\blpage{547}
(\byear{1972})
\end{barticle}
\endbibitem

\bibitem{chutinan1999verification}
\begin{bchapter}
\bauthor{\bsnm{Chutinan}, \binits{A.}},
\bauthor{\bsnm{Krogh}, \binits{B.H.}}:
\bctitle{Verification of polyhedral-invariant hybrid automata using polygonal
  flow pipe approximations}.
In: \bbtitle{Hybrid Systems: Computation and Control: Second International
  Workshop, HSCC’99 Berg en Dal, The Netherlands, March 29--31, 1999
  Proceedings 2},
pp. \bfpage{76}--\blpage{90}
(\byear{1999}).
\bcomment{Springer}
\end{bchapter}
\endbibitem

\bibitem{kurzhanski1997ellipsoidal}
\begin{bbook}
\bauthor{\bsnm{Kurzhanski}, \binits{A.}},
\bauthor{\bsnm{V{\'a}lyi}, \binits{I.}}:
\bbtitle{Ellipsoidal Calculus for Estimation and Control}.
\bpublisher{Springer},
\blocation{{}}
(\byear{1997})
\end{bbook}
\endbibitem

\bibitem{varaiya2000reach}
\begin{botherref}
\oauthor{\bsnm{Varaiya}, \binits{P.}}:
Reach set computation using optimal control.
Verification of Digital and Hybrid Systems,
323--331
(2000)
\end{botherref}
\endbibitem

\bibitem{le2010reachability}
\begin{barticle}
\bauthor{\bsnm{Le~Guernic}, \binits{C.}},
\bauthor{\bsnm{Girard}, \binits{A.}}:
\batitle{Reachability analysis of linear systems using support functions}.
\bjtitle{Nonlinear Analysis: Hybrid Systems}
\bvolume{4}(\bissue{2}),
\bfpage{250}--\blpage{262}
(\byear{2010})
\end{barticle}
\endbibitem

\bibitem{althoff2021set}
\begin{barticle}
\bauthor{\bsnm{Althoff}, \binits{M.}},
\bauthor{\bsnm{Frehse}, \binits{G.}},
\bauthor{\bsnm{Girard}, \binits{A.}}:
\batitle{Set propagation techniques for reachability analysis}.
\bjtitle{Annual Review of Control, Robotics, and Autonomous Systems}
\bvolume{4},
\bfpage{369}--\blpage{395}
(\byear{2021})
\end{barticle}
\endbibitem

\bibitem{haddad2023curious}
\begin{botherref}
\oauthor{\bsnm{Haddad}, \binits{S.}},
\oauthor{\bsnm{Halder}, \binits{A.}}:
The curious case of integrator reach sets, part i: Basic theory.
Transactions on Automatic Control
(2023)
\end{botherref}
\endbibitem

\bibitem{haddad2021anytime}
\begin{bchapter}
\bauthor{\bsnm{Haddad}, \binits{S.}},
\bauthor{\bsnm{Halder}, \binits{A.}}:
\bctitle{Anytime ellipsoidal over-approximation of forward reach sets of
  uncertain linear systems}.
In: \bbtitle{Proceedings of the Workshop on Computation-Aware Algorithmic
  Design for Cyber-Physical Systems},
pp. \bfpage{20}--\blpage{25}
(\byear{2021})
\end{bchapter}
\endbibitem

\bibitem{atallah1983linear}
\begin{barticle}
\bauthor{\bsnm{Atallah}, \binits{M.J.}}:
\batitle{A linear time algorithm for the {H}ausdorff distance between convex
  polygons}.
\bjtitle{Information Processing Letters}
\bvolume{17}(\bissue{4}),
\bfpage{207}--\blpage{209}
(\byear{1983})
\end{barticle}
\endbibitem

\bibitem{belogay1997calculating}
\begin{botherref}
\oauthor{\bsnm{Belogay}, \binits{E.}},
\oauthor{\bsnm{Cabrelli}, \binits{C.}},
\oauthor{\bsnm{Molter}, \binits{U.}},
\oauthor{\bsnm{Shonkwiler}, \binits{R.}}:
Calculating the {H}ausdorff distance between curves.
Information Processing Letters
\textbf{64}(1)
(1997)
\end{botherref}
\endbibitem

\bibitem{aspert2002mesh}
\begin{bchapter}
\bauthor{\bsnm{Aspert}, \binits{N.}},
\bauthor{\bsnm{Santa-Cruz}, \binits{D.}},
\bauthor{\bsnm{Ebrahimi}, \binits{T.}}:
\bctitle{Mesh: Measuring errors between surfaces using the {H}ausdorff
  distance}.
In: \bbtitle{Proceedings. IEEE International Conference on Multimedia and
  Expo},
vol. \bseriesno{1},
pp. \bfpage{705}--\blpage{708}
(\byear{2002}).
\bcomment{IEEE}
\end{bchapter}
\endbibitem

\bibitem{taha2015efficient}
\begin{barticle}
\bauthor{\bsnm{Taha}, \binits{A.A.}},
\bauthor{\bsnm{Hanbury}, \binits{A.}}:
\batitle{An efficient algorithm for calculating the exact {H}ausdorff
  distance}.
\bjtitle{IEEE transactions on pattern analysis and machine intelligence}
\bvolume{37}(\bissue{11}),
\bfpage{2153}--\blpage{2163}
(\byear{2015})
\end{barticle}
\endbibitem

\bibitem{goffin1983relationship}
\begin{barticle}
\bauthor{\bsnm{Goffin}, \binits{J.-L.}},
\bauthor{\bsnm{Hoffman}, \binits{A.J.}}:
\batitle{On the relationship between the {H}ausdorff distance and matrix
  distances of ellipsoids}.
\bjtitle{Linear algebra and its applications}
\bvolume{52},
\bfpage{301}--\blpage{313}
(\byear{1983})
\end{barticle}
\endbibitem

\bibitem{alt1995approximate}
\begin{barticle}
\bauthor{\bsnm{Alt}, \binits{H.}},
\bauthor{\bsnm{Behrends}, \binits{B.}},
\bauthor{\bsnm{Bl{\"o}mer}, \binits{J.}}:
\batitle{Approximate matching of polygonal shapes}.
\bjtitle{Annals of Mathematics and Artificial Intelligence}
\bvolume{13}(\bissue{3}),
\bfpage{251}--\blpage{265}
(\byear{1995})
\end{barticle}
\endbibitem

\bibitem{alt2003computing}
\begin{botherref}
\oauthor{\bsnm{Alt}, \binits{H.}},
\oauthor{\bsnm{Bra{\ss}}, \binits{P.}},
\oauthor{\bsnm{Godau}, \binits{M.}},
\oauthor{\bsnm{Knauer}, \binits{C.}},
\oauthor{\bsnm{Wenk}, \binits{C.}}:
Computing the hausdorff distance of geometric patterns and shapes.
Discrete and Computational Geometry: The Goodman-Pollack Festschrift,
65--76
(2003)
\end{botherref}
\endbibitem

\bibitem{konig2014computational}
\begin{barticle}
\bauthor{\bsnm{K{\"o}nig}, \binits{S.}}:
\batitle{Computational aspects of the {H}ausdorff distance in unbounded
  dimension}.
\bjtitle{Journal of Computational Geometry}
\bvolume{5}(\bissue{1}),
\bfpage{250}--\blpage{274}
(\byear{2014})
\end{barticle}
\endbibitem

\bibitem{jungeblut2021complexity}
\begin{botherref}
\oauthor{\bsnm{Jungeblut}, \binits{P.}},
\oauthor{\bsnm{Kleist}, \binits{L.}},
\oauthor{\bsnm{Miltzow}, \binits{T.}}:
The complexity of the {H}ausdorff distance.
arXiv preprint arXiv:2112.04343
(2021)
\end{botherref}
\endbibitem

\bibitem{marovsevic2018hausdorff}
\begin{barticle}
\bauthor{\bsnm{Maro{\v{s}}evi{\'c}}, \binits{T.}}:
\batitle{The {H}ausdorff distance between some sets of points}.
\bjtitle{Mathematical Communications}
\bvolume{23}(\bissue{2}),
\bfpage{247}--\blpage{257}
(\byear{2018})
\end{barticle}
\endbibitem

\bibitem{grothendieck1956resume}
\begin{bbook}
\bauthor{\bsnm{Grothendieck}, \binits{A.}}:
\bbtitle{R{\'e}sum{\'e} de la Th{\'e}orie M{\'e}trique des Produits Tensoriels
  topologiques}.
\bpublisher{Soc. de Matem{\'a}tica de S{\~a}o Paulo},
\blocation{{}}
(\byear{1956})
\end{bbook}
\endbibitem

\bibitem{alon2004approximating}
\begin{bchapter}
\bauthor{\bsnm{Alon}, \binits{N.}},
\bauthor{\bsnm{Naor}, \binits{A.}}:
\bctitle{Approximating the cut-norm via {G}rothendieck's inequality}.
In: \bbtitle{Proceedings of the Thirty-sixth Annual ACM Symposium on Theory of
  Computing},
pp. \bfpage{72}--\blpage{80}
(\byear{2004})
\end{bchapter}
\endbibitem

\bibitem{kindler2010ugc}
\begin{barticle}
\bauthor{\bsnm{Kindler}, \binits{G.}},
\bauthor{\bsnm{Naor}, \binits{A.}},
\bauthor{\bsnm{Schechtman}, \binits{G.}}:
\batitle{The {U}{G}{C} hardness threshold of the ${L}_{p}$ {G}rothendieck
  problem}.
\bjtitle{Mathematics of Operations Research}
\bvolume{35}(\bissue{2}),
\bfpage{267}--\blpage{283}
(\byear{2010})
\end{barticle}
\endbibitem

\bibitem{rahal2021norm}
\begin{botherref}
\oauthor{\bsnm{Rahal}, \binits{S.}},
\oauthor{\bsnm{Li}, \binits{Z.}}:
Norm induced polyhedral uncertainty sets for robust linear optimization.
Optimization and Engineering,
1--37
(2021)
\end{botherref}
\endbibitem

\bibitem{bertsimas2004robust}
\begin{barticle}
\bauthor{\bsnm{Bertsimas}, \binits{D.}},
\bauthor{\bsnm{Pachamanova}, \binits{D.}},
\bauthor{\bsnm{Sim}, \binits{M.}}:
\batitle{Robust linear optimization under general norms}.
\bjtitle{Operations Research Letters}
\bvolume{32}(\bissue{6}),
\bfpage{510}--\blpage{516}
(\byear{2004})
\end{barticle}
\endbibitem

\bibitem{dccpGitRepo}
\begin{botherref}
{DCCP Python package, GitHub repository}.
URL: https://github.com/cvxgrp/dccp
\end{botherref}
\endbibitem

\bibitem{shen2016disciplined}
\begin{bchapter}
\bauthor{\bsnm{Shen}, \binits{X.}},
\bauthor{\bsnm{Diamond}, \binits{S.}},
\bauthor{\bsnm{Gu}, \binits{Y.}},
\bauthor{\bsnm{Boyd}, \binits{S.}}:
\bctitle{Disciplined convex-concave programming}.
In: \bbtitle{2016 IEEE 55th Conference on Decision and Control (CDC)},
pp. \bfpage{1009}--\blpage{1014}
(\byear{2016}).
\bcomment{IEEE}
\end{bchapter}
\endbibitem

\bibitem{hendrickx2010matrix}
\begin{barticle}
\bauthor{\bsnm{Hendrickx}, \binits{J.M.}},
\bauthor{\bsnm{Olshevsky}, \binits{A.}}:
\batitle{Matrix p-norms are {NP-hard} to approximate if $p\neq 1,2,\infty$}.
\bjtitle{SIAM Journal on Matrix Analysis and Applications}
\bvolume{31}(\bissue{5}),
\bfpage{2802}--\blpage{2812}
(\byear{2010})
\end{barticle}
\endbibitem

\bibitem{steinberg2005computation}
\begin{botherref}
\oauthor{\bsnm{Steinberg}, \binits{D.}}:
Computation of matrix norms with applications to robust optimization.
Research thesis, Technion-Israel University of Technology
\textbf{2}
(2005)
\end{botherref}
\endbibitem

\bibitem{bhaskara2011approximating}
\begin{bchapter}
\bauthor{\bsnm{Bhaskara}, \binits{A.}},
\bauthor{\bsnm{Vijayaraghavan}, \binits{A.}}:
\bctitle{Approximating matrix p-norms}.
In: \bbtitle{Proceedings of the Twenty-second Annual ACM-SIAM Symposium on
  Discrete Algorithms},
pp. \bfpage{497}--\blpage{511}
(\byear{2011}).
\bcomment{SIAM}
\end{bchapter}
\endbibitem

\bibitem{barak2012hypercontractivity}
\begin{bchapter}
\bauthor{\bsnm{Barak}, \binits{B.}},
\bauthor{\bsnm{Brandao}, \binits{F.G.}},
\bauthor{\bsnm{Harrow}, \binits{A.W.}},
\bauthor{\bsnm{Kelner}, \binits{J.}},
\bauthor{\bsnm{Steurer}, \binits{D.}},
\bauthor{\bsnm{Zhou}, \binits{Y.}}:
\bctitle{Hypercontractivity, sum-of-squares proofs, and their applications}.
In: \bbtitle{Proceedings of the Forty-fourth Annual ACM Symposium on Theory of
  Computing},
pp. \bfpage{307}--\blpage{326}
(\byear{2012})
\end{bchapter}
\endbibitem

\bibitem{bhattiprolu2019approximability}
\begin{bchapter}
\bauthor{\bsnm{Bhattiprolu}, \binits{V.}},
\bauthor{\bsnm{Ghosh}, \binits{M.}},
\bauthor{\bsnm{Guruswami}, \binits{V.}},
\bauthor{\bsnm{Lee}, \binits{E.}},
\bauthor{\bsnm{Tulsiani}, \binits{M.}}:
\bctitle{Approximability of $p\rightarrow q$ matrix norms: generalized
  {Krivine} rounding and hypercontractive hardness}.
In: \bbtitle{Proceedings of the Thirtieth Annual ACM-SIAM Symposium on Discrete
  Algorithms},
pp. \bfpage{1358}--\blpage{1368}
(\byear{2019}).
\bcomment{SIAM}
\end{bchapter}
\endbibitem

\bibitem{gross1975logarithmic}
\begin{barticle}
\bauthor{\bsnm{Gross}, \binits{L.}}:
\batitle{Logarithmic {S}obolev inequalities}.
\bjtitle{American Journal of Mathematics}
\bvolume{97}(\bissue{4}),
\bfpage{1061}--\blpage{1083}
(\byear{1975})
\end{barticle}
\endbibitem

\bibitem{saloff1997lectures}
\begin{botherref}
\oauthor{\bsnm{Saloff-Coste}, \binits{L.}}:
Lectures on finite {M}arkov chains. {Lectures on probability theory and
  statistics (Saint-Flour, 1996)}, 301--413.
Lecture Notes in Math
\textbf{1665}
(1997)
\end{botherref}
\endbibitem

\bibitem{biswal2011hypercontractivity}
\begin{botherref}
\oauthor{\bsnm{Biswal}, \binits{P.}}:
Hypercontractivity and its applications.
arXiv preprint arXiv:1101.2913
(2011)
\end{botherref}
\endbibitem

\bibitem{wang1993structures}
\begin{barticle}
\bauthor{\bsnm{Wang}, \binits{C.}},
\bauthor{\bsnm{Zheng}, \binits{D.-S.}},
\bauthor{\bsnm{Chen}, \binits{G.-L.}},
\bauthor{\bsnm{Zhao}, \binits{S.-Q.}}:
\batitle{Structures of p-isometric matrices and rectangular matrices with
  minimum p-norm condition number}.
\bjtitle{Linear algebra and its applications}
\bvolume{184},
\bfpage{261}--\blpage{278}
(\byear{1993})
\end{barticle}
\endbibitem

\bibitem{li1994isometries}
\begin{barticle}
\bauthor{\bsnm{Li}, \binits{C.-K.}},
\bauthor{\bsnm{So}, \binits{W.}}:
\batitle{Isometries of $\ell_p$ norm}.
\bjtitle{The American mathematical monthly}
\bvolume{101}(\bissue{5}),
\bfpage{452}--\blpage{453}
(\byear{1994})
\end{barticle}
\endbibitem

\bibitem{288084}
\begin{botherref}
\oauthor{\bsnm{Glueck}, \binits{J.}}:
What are the matrices preserving the $\ell^1$-norm?
MathOverflow.
URL: https://mathoverflow.net/q/288084 (version: 2017-12-09).
\url{https://mathoverflow.net/q/288084}
\end{botherref}
\endbibitem

\bibitem{cvx}
\begin{botherref}
\oauthor{\bsnm{Grant}, \binits{M.}},
\oauthor{\bsnm{Boyd}, \binits{S.}}:
{CVX}: Matlab Software for Disciplined Convex Programming, version 2.1.
\url{http://cvxr.com/cvx}
(2014)
\end{botherref}
\endbibitem

\bibitem{bennett1975norms}
\begin{barticle}
\bauthor{\bsnm{Bennett}, \binits{G.}},
\bauthor{\bsnm{Goodman}, \binits{V.}},
\bauthor{\bsnm{Newman}, \binits{C.}}:
\batitle{Norms of random matrices}.
\bjtitle{Pacific Journal of Mathematics}
\bvolume{59}(\bissue{2}),
\bfpage{359}--\blpage{365}
(\byear{1975})
\end{barticle}
\endbibitem

\bibitem{guedon2017expectation}
\begin{bchapter}
\bauthor{\bsnm{Gu{\'e}don}, \binits{O.}},
\bauthor{\bsnm{Hinrichs}, \binits{A.}},
\bauthor{\bsnm{Litvak}, \binits{A.E.}},
\bauthor{\bsnm{Prochno}, \binits{J.}}:
\bctitle{On the expectation of operator norms of random matrices}.
In: \bbtitle{Geometric Aspects of Functional Analysis},
pp. \bfpage{151}--\blpage{162}.
\bpublisher{Springer},
\blocation{{}}
(\byear{2017})
\end{bchapter}
\endbibitem

\bibitem{aumann1965integrals}
\begin{barticle}
\bauthor{\bsnm{Aumann}, \binits{R.J.}}:
\batitle{Integrals of set-valued functions}.
\bjtitle{Journal of {M}athematical {A}nalysis and {A}pplications}
\bvolume{12}(\bissue{1}),
\bfpage{1}--\blpage{12}
(\byear{1965})
\end{barticle}
\endbibitem

\bibitem{liapounoff1940fonctions}
\begin{barticle}
\bauthor{\bsnm{Liapounoff}, \binits{A.}}:
\batitle{Sur les fonctions-vecteurs completement additives}.
\bjtitle{Izvestiya Rossiiskoi Akademii Nauk. Seriya Matematicheskaya}
\bvolume{4}(\bissue{6}),
\bfpage{465}--\blpage{478}
(\byear{1940})
\end{barticle}
\endbibitem

\bibitem{halmos1948range}
\begin{barticle}
\bauthor{\bsnm{Halmos}, \binits{P.R.}}:
\batitle{The range of a vector measure}.
\bjtitle{Bulletin of the American Mathematical Society}
\bvolume{54}(\bissue{4}),
\bfpage{416}--\blpage{421}
(\byear{1948})
\end{barticle}
\endbibitem

\bibitem{guseinov2007approximation}
\begin{barticle}
\bauthor{\bsnm{Guseinov}, \binits{K.G.}},
\bauthor{\bsnm{Ozer}, \binits{O.}},
\bauthor{\bsnm{Akyar}, \binits{E.}},
\bauthor{\bsnm{Ushakov}, \binits{V.}}:
\batitle{The approximation of reachable sets of control systems with integral
  constraint on controls}.
\bjtitle{Nonlinear Differential Equations and Applications NoDEA}
\bvolume{14}(\bissue{1}),
\bfpage{57}--\blpage{73}
(\byear{2007})
\end{barticle}
\endbibitem

\bibitem{dueri2016consistently}
\begin{bchapter}
\bauthor{\bsnm{Dueri}, \binits{D.}},
\bauthor{\bsnm{Rakovi{\'c}}, \binits{S.V.}},
\bauthor{\bsnm{A{\c{c}}{\i}kme{\c{s}}e}, \binits{B.}}:
\bctitle{Consistently improving approximations for constrained controllability
  and reachability}.
In: \bbtitle{2016 European Control Conference (ECC)},
pp. \bfpage{1623}--\blpage{1629}
(\byear{2016}).
\bcomment{IEEE}
\end{bchapter}
\endbibitem

\bibitem{halder2020smallest}
\begin{barticle}
\bauthor{\bsnm{Halder}, \binits{A.}}:
\batitle{Smallest ellipsoid containing $ p $-sum of ellipsoids with application
  to reachability analysis}.
\bjtitle{IEEE Transactions on Automatic Control}
\bvolume{66}(\bissue{6}),
\bfpage{2512}--\blpage{2525}
(\byear{2020})
\end{barticle}
\endbibitem

\bibitem{brockett1970finite}
\begin{bbook}
\bauthor{\bsnm{Brockett}, \binits{R.W.}}:
\bbtitle{Finite Dimensional Linear Systems}.
\bpublisher{John Wiley \& Sons, Inc.},
\blocation{{}}
(\byear{1970})
\end{bbook}
\endbibitem

\bibitem{haddad2020convex}
\begin{bchapter}
\bauthor{\bsnm{Haddad}, \binits{S.}},
\bauthor{\bsnm{Halder}, \binits{A.}}:
\bctitle{The convex geometry of integrator reach sets}.
In: \bbtitle{2020 American Control Conference (ACC)},
pp. \bfpage{4466}--\blpage{4471}
(\byear{2020}).
\bcomment{IEEE}
\end{bchapter}
\endbibitem

\bibitem{haddad2022certifying}
\begin{barticle}
\bauthor{\bsnm{Haddad}, \binits{S.}},
\bauthor{\bsnm{Halder}, \binits{A.}}:
\batitle{Certifying the intersection of reach sets of integrator agents with
  set-valued input uncertainties}.
\bjtitle{IEEE Control Systems Letters}
\bvolume{6},
\bfpage{2852}--\blpage{2857}
(\byear{2022})
\end{barticle}
\endbibitem

\end{thebibliography}


\end{document}